\documentclass[12pt]{article}
\usepackage{color}
\usepackage{graphicx}
\usepackage{subcaption} % 加载 subcaption 宏包，用于子图排版
\usepackage{amssymb,amsmath,graphicx,acronym,amsfonts}
\usepackage[square, comma, sort&compress, numbers]{natbib}
\usepackage{enumerate}
\usepackage{epstopdf}

\newtheorem{corollary}{Corollary}[section]
\newtheorem{theorem}{Theorem}[section]
\newtheorem{lemma}{Lemma}[section]
\newtheorem{definition}{Definition}[section]
\newtheorem{proposition}{Proposition}[section]
\newtheorem{example}{Example}[section]
\newtheorem{assum}{Assumption}[section]
\newtheorem{algo}{Algorithm}[section]
\newtheorem{Remark}{Remark}[section]

\numberwithin{equation}{section}
\def\x{{\bf x}}
\def\y{{\bf y}}

\def\w{{\bf w}}
\def\s{{\bf s}}
\def\u{{\bf u}}
\def\v{{\bf v}}
\def\A{{\cal A}}

\def\bc{\begin{corl}}
\def\bc{\end{corl}}
\def\ba{\begin{algo}}
\def\ea{\end{algo}}
\def\br{\begin{Remark}}
\def\er{\end{Remark}}
\def\bs{\begin{assum}}
\def\es{\end{assum}}
\def\bt{\begin{theorem}}
\def\et{\end{theorem}\vskip 3pt}
\def\bl{\begin{lemma}}
\def\el{\end{lemma}}
\def\ep{\end{proposition}}
\def\bp{\begin{proposition}}
\def\qed{\hfill{$\Box$}\vskip 5pt}
\def\be{\begin{example}}
\def\ee{\end{example}}
\def\bd{\begin{definition}}
\def\ed{\end{definition}}
\def\bc{\begin{corollary}}
\def\ec{\end{corollary}}
\def\proof{\noindent\it Proof. \hspace{1mm}\rm}
\input epsf
\begin{document}
\title{\bf An Efficient Memory Gradient Method for Extreme M-Eigenvalues of Elastic type Tensors\thanks{This work was supported by the National Natural Science Foundation of P.R. China (Grant No.12171064).}}
%\thanks{The first author's work was supported by the National Natural Science Foundation of P.R. China (Grant No.12171064). This work was supported by the Natural Science Foundation of China (Grant No. 12071249), Shandong Provincial Natural Science Foundation of Distinguished Young Scholars(Grant No. ZR2021JQ01), Hong Kong Innovation and Technology Commision (InnoHK Project CIMDA) and Hong Kong Research Grants Council (Project CityU 11204821).}}
%\author{Zhuolin Du\thanks{School of Management Science, Qufu Normal University, Rizhao Shandong, 276800, China. E-mail: duzhuolin728@163.com}
%\quad
%%Chunyan Wang\thanks{School of Management Science, Qufu Normal University, Rizhao Shandong, 276800, China. E-mail: wchunyanwang562@163.com. }
%%\quad
%Haibin Chen
%\thanks{School of Management Science, Qufu Normal University, Rizhao Shandong, 276800, China. E-mail: chenhaibin508@163.com.}
%\quad
%\quad Hong Yan
%\thanks{Centre for Intelligent Multidimensional Data Analysis, Hong Kong Science Park, Shatin, Hong Kong. Department of Electrical Engineering, City University of Hong Kong, Hong Kong, China. E-mail: h.yan@cityu.edu.hk.}

\date{}
\author{ Zhuolin Du, Yisheng Song\thanks{Corresponding author E-mail: yisheng.song@cqnu.edu.cn}\\
	School of Mathematical Sciences,  Chongqing Normal University, \\
	Chongqing, 401331, P.R. China. \\ Email: duzhuolin728@163.com (Du); yisheng.song@cqnu.edu.cn (Song)}
\maketitle
\vspace{-0.6cm}
\begin{abstract}
M-eigenvalues of fourth order hierarchically  symmetric  tensors play a significant role in nonlinear elastic material analysis and quantum entanglement problems. This paper focuses on computing extreme M-eigenvalues for such tensors. To achieve this, we first reformulate the M-eigenvalue problem as a sequence of unconstrained optimization problems by introducing a shift parameter. Subsequently, we develop a memory gradient method specifically designed to approximate these extreme M-eigenvalues. Under this framework, we establish the global convergence of the proposed method. Finally, comprehensive numerical experiments demonstrate the efficacy and stability of our approach.

\smallskip

\noindent{\bf Keywords:} Hierarchically  symmetric tensors; M-eigenvalues; Memory gradient method; Global convergence

\noindent{\bf AMS Subject Classification(2010):} 90C23, 65H17, 15A18, 90C30.

\end{abstract}

\newpage
\section{Introduction}
Tensor eigenvalue problems play a critical role in numerical multilinear algebra \cite{CQ10,DQW13,HQ18,LLK14,QL17,YY11}, with significant applications in areas such as magnetic resonance imaging (MRI) \cite{BV08}, spectral hypergraph theory \cite{LG13}, and automatic control \cite{GC19,GL19,GZ19}. In particular, M-eigenvalue problems for fourth order hierarchically symmetric tensors have attracted considerable attention in recent years due to their wide range of applications, including nonlinear elastic material analysis \cite{DQW13,GM72,HDH09,HXL17,HQ18,PC02,QDH09,WW03} and quantum entanglement problems in physics \cite{DLM07,CDC07,EPR35,S35}. To further investigate properties related to the M-eigenvalues of fourth order hierarchically symmetric tensors, Han-Dai-Qi \cite{HDH09} and Qi-Dai-Han \cite{QDH09} introduced the definitions of M-eigenvalues and M-eigenvectors for this class of tensors.

\bd\label{def1.1} Let $\mathcal{A}=(a_{ijkl})\in\mathbb{R}^{m\times n\times m\times n}$ or $\mathcal{A}=(a_{ikjl})\in\mathbb{R}^{m\times m\times n\times n}$. If for all $i,k\in [m],~j,l\in [n]$, $$a_{ijkl}=a_{kjil}=a_{ilkj}=a_{klij}, $$ where $[n]=\{1, 2,\ldots,n\}$,
then $\mathcal{A}$ is called a {\em\bf  fourth order hierarchically symmetric tensor} (Helbig \cite{FHR1996,H1994}). Such a  tensor is called {\em\bf elastic type tensor} also (Backus \cite{B1970}).

If for all $i,k\in [m],~j,l\in [n]$,  $$a_{ijkl}=a_{kjil}=a_{ilkj},$$
then $\mathcal{A}$ is called a {\em\bf fourth order partially symmetric tensors} (Qi-Dai-Han \cite{QDH09}). \ed

   Obviously, a fourth order hierarchically symmetric tensor is symmetric whenever $a_{ijkl}=a_{ikjl},$ and a fourth order  $n$-dimensional ($m=n$) hierarchically symmetric tensor has $\dfrac{n(n+1)(n^2+n+2)}{8}$  independent components (Backus \cite{B1970}).  When $m=n=3$,  a fourth order hierarchically symmetric tensor has $21$  independent components,  and a fourth order  partially symmetric tensor has $36$  independent components.  For the sake of simplicity, we denote $\mathbb{H}^{m\times n\times m\times n}$ the set of all fourth order hierarchically symmetric tensors.
\bd\label{def1.2} (\cite{HDH09, QDH09}) Let $\mathcal{A}=(a_{ijkl})\in\mathbb{H}^{m\times n\times m\times n}$. If $(\lambda,\x,\y)\in\mathbb{R}\times\mathbb{R}^m\backslash\{{\bf 0}\}\times\mathbb{R}^n\backslash\{{\bf 0}\}$ satisfies the following system
\begin{equation}\label{1.1}
\mathcal{A}\cdot\y\x\y=\lambda\x,~~\mathcal{A}\x\y\x\cdot=\lambda\y,~~\x^\top\x=1,~~\y^\top\y=1,
\end{equation}
where
 $$(\mathcal{A}\cdot\y\x\y)_i=\sum_{\substack{k\in [m]; ~j, l \in [n]}}a_{ijkl}y_j x_ky_l
\quad \mbox{and}\quad(\mathcal{A}\x\y\x\cdot)_l = \sum_{\substack{i,k\in [m];~j \in [n]}} a_{ijkl} x_i y_j x_k,$$
then $\lambda$ is called an {\em\bf M-eigenvalue} of $\mathcal{A}$ with the {\em\bf left M-eigenvector} $\x$ and the {\em\bf right M-eigenvector} $\y$.
\ed

The exploration of M-eigenvalues for fourth order hierarchically symmetric tensors has become an active research area over the past decade \cite{CCW20,CHW22,LLL19,HLX21,HLW20,WSL20,WQZ09}. Significant progress has been made through interval inclusion methods and spectral approximation techniques \cite{DDV00,KR02}. However, substantial computational challenges remain in accurately characterizing these M-eigenvalues. To address this, Wang et al.  \cite{WQZ09} developed a computational approach that reformulates the largest M-eigenvalue problem as a biquadratic homogeneous polynomial optimization over unit spheres; this method also enables verification of strong ellipticity. Building on this foundation, Li et al. \cite{LLL19} derived two specialized M-eigenvalue inclusion intervals for fourth order hierarchically symmetric tensors. Subsequently, Che et al. \cite{CCW20} estimated M-eigenvalues by analyzing distinct M-eigenvector components and established upper bounds for the M-spectral radius of nonnegative hierarchically symmetric tensors. Notably, these bounds were incorporated as parameters in the WQZ algorithm \cite{WQZ09}, demonstrating their utility in iterative methods. A key application of M-eigenvalue sets is verifying positive definiteness \cite{LL15,LLK14,LWZ14}. A significant limitation of existing inclusion sets \cite{CCW20,LLL19}, however, is their confinement to regions near the coordinate axes, rendering them ineffective for determining M-positive definiteness. Addressing this gap, recent work \cite{HLX21}  introduced novel S-type M-eigenvalue inclusion theorems. Complementing these theoretical advances, an alternating shifted power method was developed for computing extremal M-eigenvalues of fourth order hierarchically symmetric tensors \cite{WCW23}.

Beyond iterative methods like the alternating shifted power method, several specialized optimization strategies have recently emerged for tensor eigenvalue problems \cite{HL13,HLQ13,HC15,HCD15,NQ15}. Han \cite{HL13} introduced an unconstrained optimization model for computing generalized eigenpairs of symmetric tensors; applying the BFGS method to this model yields superlinearly convergent sequences, providing a robust solution framework. In practical applications such as MRI \cite{QY10}, the focus often shifts toward computing extremal eigenvalues or local maxima rather than determining the entire spectral set, highlighting the need for efficient, targeted algorithms. However, as tensor order or dimension increases, these optimization problems become large-scale or even very large-scale, posing significant computational challenges. To address this, Li et al. \cite{LG13} proposed a linearly convergent adaptive gradient method. This raises a natural question: Can adaptive gradient methods be extended to fourth order hierarchically symmetric tensors? We note that the steepest descent method \cite{DM02} relies solely on current gradient information to determine the next iterate, potentially underutilizing historical data. In contrast, Narushima et al. \cite{NY06} developed a memory gradient method that incorporates past information to enhance classical gradient descent while circumventing Hessian computations. Motivated by these insights, particularly the untapped potential of historical gradient data for structured tensor problems, we propose a Memory Gradient Method (MGM) for computing the largest M-eigenvalue of fourth order hierarchically symmetric tensors. We establish the global convergence of MGM and present comprehensive numerical results demonstrating its computational efficiency.

The remainder of this paper is organized as follows. Section 2 introduces a transformation that reformulates the M-eigenvalue problem into an unconstrained optimization framework. Building on this foundation, Section 3 develops the Memory Gradient Method (MGM) for computing extremal M-eigenvalues and establishes its global convergence. Section 4 presents comprehensive numerical experiments demonstrating the computational efficiency and performance advantages of MGM compared to existing methods. Finally, Section 5 provides concluding remarks.

\setcounter{equation}{0}
\section{Unconstrained framework for M-eigenvalues of elastic type tensors}

Let $\mathbb{R}^n$ be the $n$ dimensional real Euclidean space. Generally, scalars are denoted by lowercase letters such as $a, b$ and vectors are denoted by bold lowercase letters such as $\x,\y$. Furthermore, matrices are denoted by capital letters such as ${\bf A}$, ${\bf B}$ and tensors are denoted by calligraphic capital letters such as $\mathcal{A},\mathcal{B}$. The superscript $^\top$ stands for the transpose of vectors and matrices. Let $\mathbb{H}^{m\times n\times m\times n}$ denote the set of all four order hierarchically symmetric tensors. For any $\mathcal{A}=(a_{ijkl})\in\mathbb{H}^{m\times n\times m\times n}$, the corresponding gradient is as follows:
$$\nabla_\x(\mathcal{A}\x\y\x\y)=2\mathcal{A}\cdot\y\x\y,~~~~\nabla_\y(\mathcal{A}\x\y\x\y)=2\mathcal{A}\x\y\x\cdot.$$
For the sake of simple, $\mathcal{A}\cdot\y\x\y$ and $\mathcal{A}\x\y\x\cdot$ are also denoted by $\mathcal{A}\y\x\y$ and $\mathcal{A}\x\y\x$ respectively.

Motivated by the work of Auchmuty \cite{AG91}, we consider the following unconstrained optimization problem, which has close relationship with the M-eigenvalue problem (see Theorem \ref{th2.1}):
\begin{equation}\label{2.1}
\min\limits_{\substack{\x\in\mathbb{R}^m \\ \y\in\mathbb{R}^n}}f(\x,\y)=\frac{1}{4}(\x^\top\x)^2(\y^\top\y)^2-\frac{1}{2}\mathcal{A}\x\y\x\y.
\end{equation}
Then, the gradient of $f(\x,\y)$ with respect to $\x$ and $\y$ are given by
\begin{equation}\label{2.2}
g_1(\x,\y)=\nabla_\x f(\x,\y)=(\x^\top\x)\x(\y^\top\y)^2-\mathcal{A}\y\x\y,
\end{equation}
\begin{equation}\label{2.3}
g_2(\x,\y)=\nabla_\y f(\x,\y)=(\x^\top\x)^2(\y^\top\y)\y-\mathcal{A}\x\y\x.
\end{equation}
Moreover, we have the following results.

\bt\label{th2.1}
Let $\mathcal{A}\in \mathbb{H}^{m\times n\times m\times n}$ be a nonzero tensor. Assume that $\lambda^\ast$ is the largest M-eigenvalue of $\mathcal{A}$. Then we have the following results

(1) The problem \eqref{2.1} has a global minimum.

(2) Let $\x\in\mathbb{R}^m\setminus\{\bf0\}$ and $\y\in\mathbb{R}^n\setminus\{\bf0\}$ be critical points of \eqref{2.1}. Then $\lambda=(\x^\top\x)(\y^\top\y)$ is a positive M-eigenvalue of $\A$, with the associated M-eigenvectors $\u=\x/\|\x\|$ and $\v=\y/\|\y\|$.

(3) If $\lambda^*>0$, then $f(\x,\y)$ attains its global minimum $f_{\min}=-\frac{1}{4}(\lambda^\ast)^2$ at nonzero critical points $(\x,\y)$.

%(4) If $\lambda^\ast\leq0$, the only critical points of $f(\x, \y)$ are $(\bf 0,\y)$ and $(\x, \bf 0)$, for any $\x\in\mathbb{R}^m, \y\in\mathbb{R}^n $. Moreover, $f(\x,\y)$ attains its global minimum at these critical points.
\et

\proof (1) Since the unit sphere is compact and the function $(\u,\v)\mapsto\left|\mathcal{A}\u\v\u\v\right|$ is continuous, denote $C=\max_{\|\u\| = 1, \|\v\| = 1} \left|\mathcal{A}\u\v\u\v\right|$, and the optimal value $C$ is attainable, i.e., there exist unit vectors $\u_0,\v_0$ such that $C=\left|\mathcal{A}\u_0\v_0\u_0\v_0\right|$.
Since $\mathcal{A}$ is a nonzero tensor, we have $C>0$. Now, we consider two cases based on the sign of $\mathcal{A}\u\v\u\v$.
Let $t=\|\x\|^2\|\y\|^2$, if $t=0$, then $f(\x,\y)=0$. Otherwise, we obtain that
$$
f(\x,\y)=\frac{1}{4}(\x^\top\x)^2(\y^\top\y)^2-\frac{1}{2}\mathcal{A}\x\y\x\y
\geq\frac{1}{4}\|\x\|^4\|\y\|^4-\frac{C}{2}\|\x\|^2\|\y\|^2
=\frac{1}{4}t^2-\frac{C}{2}t.
$$
By direct computation, we know that $f(\x,\y)\geq -\frac{1}{4}C^2$, and the equality holds when $t=C$.

(2) By (\ref{2.2})-(\ref{2.3}), the critical points of $f(\x,\y)$ satisfy
\begin{equation}\label{a}
\mathcal{A}\y\x\y=(\x^\top\x)(\y^\top\y)^2\x~ \mbox{and}~\mathcal{A}\x\y\x=(\x^\top\x)^2(\y^\top\y)\y.
\end{equation}
Denote $\u=\x/\|\x\|,\v=\y/\|\y\|$. Then, combining this with \eqref{a}, it holds that
$$\mathcal{A}\v\u\v=(\x^\top\x)\frac{(\y^\top\y)^2}{\|\y\|^2}\u~ \mbox{and}~\mathcal{A}\u\v\u=\frac{(\x^\top\x)^2}{\|\x\|^2}(\y^\top\y)\v.$$
By Definition \ref{def1.2}, we have that $\lambda=(\x^\top\x)(\y^\top\y)>0$ satisfies $\mathcal{A}\v\u\v=\lambda\u$, $\mathcal{A}\u\v\u=\lambda\v$. Thus, $\u\in\mathbb{R}^m\setminus\{\bf0\}$ and $\v\in\mathbb{R}^n\setminus\{\bf0\}$ are M-eigenvectors corresponding to the positive M-eigenvalue $\lambda$.

(3) Let $\lambda$ be an arbitrary positive M-eigenvalue of $\mathcal{A}$. By Definition \ref{def1.2}, there exist unit vectors $\u,\v$ such that $\mathcal{A}\v\u\v=\lambda\u~ \mbox{and}~\mathcal{A}\u\v\u=\lambda\v$. Denote $\x=p\u$ and $\y=q\v$ for some scalars $p,q\in\mathbb{R}\setminus\{0\}$.
The gradients of $f(\x, \y)$ are derived as follows
$$
\begin{aligned}
\nabla_{\x}f(\x,\y)&=(\x^\top\x)(\y^\top\y)^2\x-\mathcal{A}\y\x\y\\&=p^2q^4p\u-\mathcal{A}(q\v)(p\u)(q\v)\\
&=p^3q^4\u-pq^2\mathcal{A}\v\u\v=p^3q^4\u-pq^2\lambda\u\\
&=pq^2(p^2q^2-\lambda)\u
\end{aligned}
$$
$$
\begin{aligned}
\nabla_{\y}f(\x,\y)&=(\x^\top\x)^2(\y^\top\y)\y-\mathcal{A}\x\y\x\\&=p^4q^2q\v-\mathcal{A}(p\u)(q\v)(p\u)\\
&=p^4q^3\v-p^2q\mathcal{A}\u\v\u=p^4q^3\v-p^2q\lambda\v\\
&=p^2q(p^2q^2-\lambda)\v.
\end{aligned}
$$
Note that $\x^\top\x=(p\u)^\top(p\u)=p^2$, and $\y^\top\y=(q\v)^\top(q\v)=q^2$. Choosing $p,q$ such that $p^2q^2=\lambda$, we have $\nabla_{\x}f(\x,\y)=0$ and $\nabla_{\y}f(\x,\y)=0$. Consequently, $(\x,\y)$ is a nonzero critical point of $f(\x,\y)$, and satisfying $(\x^\top\x)(\y^\top\y)=p^2q^2=\lambda$.
At this critical point, we also have $\mathcal{A}\x\y\x\y=(\x^\top\x)^2(\y^\top\y)^2=\lambda^2$.
Therefore, $$f(\x,\y)=-\frac{1}{4}(\x^\top\x)^2(\y^\top\y)^2=-\frac{1}{4}\lambda^2\geq -\frac{1}{4}(\lambda^\ast)^2.$$
This implies that the global minimum is
$$
f_{\min} = -\frac{1}{4}(\lambda^\ast)^2,
$$
attained at a critical point corresponding to $\lambda^\ast = (\x^\top\x)(\y^\top\y)$.
%(4) Since the largest M-eigenvalue satisfies $\lambda^\ast\leq0$, $\lambda=(\x^\top\x)(\y^\top\y)$ does not hold for any M-eigenvectors associated with an M-eigenvalue $\lambda$, as $(\x^\top\x)(\y^\top\y)=\|\x\|^2\|\y\|^2>0$ for any non-zero $\x\in\mathbb{R}^m$ and $\y\in\mathbb{R}^n$. Therefore, by Theorem \ref{th2.1}, $(\bf 0,\y)$ and $(\x, \bf 0)$ are only critical points and the unique global minimum.
\qed

\begin{Remark}
If all M-eigenvalues of $\mathcal{A}$ are non-positive, then $f(\x, \y)$ has no nonzero critical points. Indeed, the existence of a nonzero critical point $(\x, \y)$ would imply that $\lambda = (\x^\top\x)(\y^\top\y) > 0$ is an M-eigenvalue. Thus, in this case, the set of critical points is restricted to trivial solutions of the form $(\mathbf{0}, \y)$ or $(\x, \mathbf{0})$, which do not correspond to any M-eigenvalue.
\end{Remark}

To address the case where $\mathcal{A}$ has only non-positive M-eigenvalues, we introduce the following shifted problem with a parameter $t>0$ as follows
\begin{equation}\label{2.5}
\mathop{\min}\limits_{\substack{\x\in\mathbb{R}^m\\ \y\in\mathbb{R}^n}}f_t(\x,\y)=\frac{1}{4}(\x^\top\x)^2(\y^\top\y)^2-\frac{1}{2}\mathcal{A}\x\y\x\y-\frac{t}{2}(\x^\top\x)(\y^\top\y).
\end{equation}
For a sufficiently large $t$, it holds that $f_t(\x, \y) < 0$ for any nonzero vectors $(\x, \y)$.
On the other hand, $f_t({\bf0},\y)=0$ and $f_t(\x,{\bf0})=0$ for all $\x\in\mathbb{R}^m$ and $\y\in\mathbb{R}^n$. Therefore, points of the form $(\bf0,\y)$ or $(\x,\bf0)$ are maximizers of $f_t(\x,\y)$, while the global minimum is strictly negative and attained at a nonzero point. The gradients of $f_t(\x,\y)$ are given by
\begin{equation}\label{2.6}
g_{t_1}(\x,\y)=\nabla_\x f_t(\x,\y)=(\x^\top\x)(\y^\top\y)^2\x-\mathcal{A}\y\x\y-t(\y^\top\y)\x,
\end{equation}
\begin{equation}\label{2.7}
g_{t_2}(\x,\y)=\nabla_\y f_t(\x,\y)=(\x^\top\x)^2(\y^\top\y)\y-\mathcal{A}\x\y\x-t(\x^\top\x)\y.
\end{equation}

It is clear that for all $\x\in\mathbb{R}^m$ and $\y\in\mathbb{R}^n$, the points $(\bf0,\y)$ and $(\x,\bf0)$ are critical points of $f_t(\x,\y)$. More importantly, any nonzero critical point $(\x,\y)$ of \eqref{2.5} corresponds to an M-eigenvalue $\lambda=({\x}^\top\x)({\y}^\top\y)-t$.  Since $f_t(\mathbf{x},\mathbf{y})$ attains negative values for nonzero $(\mathbf{x},\mathbf{y})$, a suitable descent algorithm for \eqref{2.5} is expected to converge to a nonzero critical point.

Therefore, by solving either the original problem \eqref{2.1} or the shifted problem \eqref{2.5}, the M-eigenvalues and their associated M-eigenvectors can be effectively computed. The following algorithm outlines this adaptive procedure.

\medskip
\medskip
\medskip
\vspace{4mm}
\begin{tabular}{@{}l@{}}
\hline
 \multicolumn{1}{c}{\bf Algorithm 1 }\\
\hline
\quad Step 0. Input $\mathcal{A}\in\mathbb{P}^{m\times n\times m \times n}, ~t\geq1,~\rho>1$ and $0<\epsilon\ll 1$, $k=0$. \\
\quad Step 1. Solving problem (\ref{2.1}) to obtain $(\x_k,\y_k)$. Compute $\lambda_k=({\x_k}^\top\x_k)({\y_k}^\top\y_k)$.\\
\quad Step 2. If $\min\{\|\x_k\|,\|\y_k\|\}>\epsilon$, output $(\x_k,\y_k)$ and $\lambda_k$, Stop. Otherwise go to\\ Step 3.\\
\quad Step 3. Solve problem (\ref{2.5}) to obtain $(\x_k,\y_k)$. Compute $\lambda_k=({\x_k}^\top\x_k)({\y_k}^\top\y_k)-t$.\\
\quad Step 4. If $\min\{||\mathbf{x}_k||, ||\mathbf{y}_k||\} > \epsilon$, output $(\mathbf{x}_k, \mathbf{y}_k)$ and $\lambda_k$, Stop. Otherwise, update \\$t = \rho t$, and repeat Step 3.\\
\hline
\end{tabular}
\vspace{4mm}
\medskip

\begin{Remark}
The iterative process between Step 2 and Step 3 constitutes an inner loop. Since any suitable descent algorithm applied to (\ref{2.5}) is guaranteed to converge to a nonzero critical point provided that $t$ is sufficiently large, this inner loop ensures finite termination. Consequently, Algorithm 1 is well-defined.
\end{Remark}

\begin{Remark}
In practice, the same unconstrained optimization method should be applied to solve both the original problem (\ref{2.1}) and the shifted problem (\ref{2.5}). In the subsequent section, we introduce a specialized memory gradient method tailored for this class of problems.
\end{Remark}

\section{A memory gradient method and its convergence}
In this section, we present the MGM designed to solve the unconstrained optimization problems formulated in Section 2. Specifically, MGM is employed as the primary solver in Steps 1 and 3 within the adaptive framework of Algorithm 1.

For simplicity, let $\mathbf{z} = (\mathbf{x}^\top, \mathbf{y}^\top)^\top \in \mathbb{R}^{m+n}$ and $\Phi(\mathbf{z}): \mathbb{R}^{m+n} \to \mathbb{R}$ denote the objective function (representing either $f(\mathbf{x},\mathbf{y})$ or $f_t(\mathbf{x},\mathbf{y})$). Correspondingly, let $g(\mathbf{z}) = \nabla \Phi(\mathbf{z})$ denote the gradient. The iterative update scheme of the proposed method is given by
$$
{\bf z}_{k+1}={\bf z}_k+\alpha_k{\bf d}_{k},
$$
where $\alpha_k>0$ is a step size determined by a line search procedure. The search direction ${\bf d}_k$ is computed as follows for $k\geq1$
\begin{equation}\label{3.1}
{\bf d}_k=-\gamma_k g({\bf z}_k)+\frac{1}{N}\sum_{i=1}^N\beta_{k_i}{\bf d}_{k-i},
\end{equation}
where $ \beta_{k_i} \in \mathbb{R}$ for $i=1,\ldots,N$ and $ \gamma_k> 0$. For the case $k<N$, \eqref{3.1} is interpreted as ${\bf d}_k=-\gamma_k g({\bf z}_k)+\frac{1}{N}\sum_{i=1}^N\beta_{k}{\bf d}_{k-i}$. The initial search direction is set as the steepest descent direction with a scaling parameter $\gamma_0>0$, specifically, ${\bf d}_{0}=-\gamma_0 g({\bf z}_{0})$. Note that the parameters are different from those proposed by Miele et al. \cite{MC69}. To ensure the search direction satisfies descent property, we define the parameters as follows
\begin{equation}\label{3.3}
\beta_{k_i} = \|g({\bf z}_k)\|^2 \varphi_{k_i}^{\dagger}, \quad \text{where }
a^{\dagger} =
\begin{cases}
0 & \text{if } a = 0, \\
\dfrac{1}{a} & \text{otherwise}.
\end{cases}
\end{equation}
Here, the scalars $\varphi_{k_i}$ are chosen to satisfy
\begin{equation}\label{3.2}
\begin{cases}
\varphi_{k_1} > \max\left\{ \dfrac{{g({\bf z}_k)}^\top {\bf d}_{k-1}}{\gamma_k},\ 0 \right\}, & i = 1, \\
\varphi_{k_i} \geq \max\left\{ \dfrac{{g({\bf z}_k)}^\top {\bf d}_{k-i}}{\gamma_k},\ 0 \right\}, & i = 2, \ldots, N.
\end{cases}
\end{equation}
It is important to note that $a^{\dagger}a \leq 1 $, and from \eqref{3.3} and \eqref{3.2}, it follows that $\beta_{k_1}>0 $ and $ \beta_{k_i}\geq 0 $ for $i= 2,\ldots, N $.

Building on these properties, the following theorem establishes the descent property of the proposed search direction.

\bt\label{them3.1}
Let the search direction ${\bf d}_k $ be defined by \eqref{3.1}. Suppose that the parameters $\beta_{k_i}$ and $\varphi_{k_i}$ are chosen to satisfy conditions \eqref{3.3} and \eqref{3.2} for all $k$. Then, the descent condition holds for all $k$, i.e., $g(\mathbf{z}_k)^\top \mathbf{d}_k < 0$.
\et
\proof For $k=0$, we have $g({\bf z}_0)^\top {\bf d}_0=-\gamma_0\|g({\bf z}_0)\|^2<0$, which establishes the descent property at the initial step.

For $k\geq1$, we consider the expression $$-\gamma_k\|g({\bf z}_k)\|^2+\beta_{k_i}g({\bf z}_k)^\top {\bf d}_{k-i}, ~i=1,\ldots,N.$$

Case 1: $i=1$.
From \eqref{3.1} and \eqref{3.3}, and considering that $\beta_{k_1},\gamma_k>0$, it holds that
\begin{equation*}
\begin{aligned}
&-\gamma_k\|g({\bf z}_k)\|^2+\beta_{k_1}g({\bf z}_k)^\top {\bf d}_{k-1}\\
&\leq-\gamma_k\|g({\bf z}_k)\|^2+\beta_{k_1}\max \left\{g({\bf z}_k)^\top {\bf d}_{k-1},0\right\} \\
&=-\gamma_k\|g({\bf z}_k)\|^2+\|g({\bf z}_k)\|^2\varphi_{k_1}^\dagger\max\left\{g({\bf z}_k)^\top {\bf d}_{k-1},0\right\} \\
&<-\gamma_k\|g({\bf z}_k)\|^2+\gamma_k\|g({\bf z}_k)\|^2\varphi_{k_1}^\dagger\varphi_{k_1} \\
&\leq 0.
\end{aligned}
\end{equation*}
The strict inequality follows from the condition in \eqref{3.2} for $i=1$, and the last inequality uses the property $a^\dagger a \le 1$.

Case 2: $i = 2,\ldots,N$. Similarly,
\begin{equation*}
\begin{aligned}
&-\gamma_k\|g({\bf z}_k)\|^2+\beta_{k_i}g({\bf z}_k)^\top {\bf d}_{k-i}\\
&\leq-\gamma_k\|g({\bf z}_k)\|^2+\beta_{k_i}\max \left\{g({\bf z}_k)^\top {\bf d}_{k-i},0\right\} \\
&=-\gamma_k\|g({\bf z}_k)\|^2+\|g({\bf z}_k)\|^2\varphi_{k_i}^\dagger\max\left\{g({\bf z}_k)^\top {\bf d}_{k-i},0\right\} \\
&\leq-\gamma_k\|g({\bf z}_k)\|^2+\gamma_k\|g({\bf z}_k)\|^2\varphi_{k_i}^\dagger\varphi_{k_i} \\
&\leq 0.
\end{aligned}
\end{equation*}
Combining this with \eqref{3.1}, it follows that
\begin{equation}\label{3.4}
\begin{aligned}
g({\bf z}_k)^\top {\bf d}_k&=-\gamma_k\|g({\bf z}_k)\|^2+\frac{1}{N}\sum_{i=1}^N\beta_{k_i} g({\bf z}_k)^\top {\bf d}_{k-i}\\
&= \frac{1}{N} \sum_{i=1}^N \left(-\gamma_k\|g({\bf z}_k)\|^2+\beta_{k_i}g({\bf z}_k)^\top {\bf d}_{k-i}\right)<0.
\end{aligned}
\end{equation}
Therefore, the descent condition is satisfied and the desired results hold.
\qed

It is worth noting that the index satisfying the strict inequality \eqref{3.2} is not necessarily restricted to $i = 1$. Specifically, the conclusion of the theorem remains valid provided that there exists at least one index $i$ such that
$$
\varphi_{k_i} > \max \left\{ \frac{g({\bf z}_{k}){\bf d}_{k-i}}{\gamma_k}, \, 0 \right\}.
$$
However, utilizing the most recent iterative information is a natural choice.

Next, we focus on establishing the sufficient descent condition, i.e.,
\begin{equation}\label{3.5}
g({\bf z}_k)^\top{\bf d}_{k} \leq-c\|g({\bf z}_k)\|^2, \quad \forall~ k \geq 1
\end{equation}
for some positive constant $c$. The following theorem shows that condition \eqref{3.5} is satisfied by the proposed method under stronger assumptions on $\gamma_k, \varphi_{k_i}$ for $i = 1,\ldots,N $.

\bt\label{them3.2}
Let the search direction ${\bf d}_{k}$ be defined by \eqref{3.1}. Suppose there exists a constant $\bar{\gamma}> 0 $ such that $ \gamma_k\geq \bar{\gamma}$ for all $k$, and $\varphi_{k_i}$ ($i= 1,\ldots, N $) are chosen to satisfy
\begin{equation}\label{3.6}
\begin{cases}
g({\bf z}_k)^\top{\bf d}_{k-1}+\|g({\bf z}_k)\|\|{\bf d}_{k-1}\|<\gamma_k \varphi_{k_1}, & (i = 1), \\[6pt]
g({\bf z}_k)^\top{\bf d}_{k-i}+\|g({\bf z}_k)\|\|{\bf d}_{k-i}\|\leq\gamma_k \varphi_{k_i}, & (i = 2, \ldots, N).
\end{cases}
\end{equation}
Then, the proposed method satisfies the sufficient descent condition \eqref{3.5}.
\et
\proof
Define the index set $\mathcal{I} = \{ i \in \{1, \dots, N\} \mid g(\mathbf{z}_k)^\top \mathbf{d}_{k-i} > 0 \}$. Let $t = |\mathcal{I}|$. It is clear that $0 \le t \le N$.

Note that for $i\notin\mathcal{I}$, we have $g({\bf z}_k)^\top{\bf d}_k\leq0$. By \eqref{3.1},\eqref{3.3}, and \eqref{3.6}, it holds that
$$
\begin{aligned}
g({\bf z}_k)^\top{\bf d}_k &= -\gamma_k \|g({\bf z}_k)\|^2 +\frac{1}{N}\sum_{i=1}^N g({\bf z}_k)^\top{\bf d}_{k-i}\varphi_{k_i}^\dagger \|g({\bf z}_k)\|^2 \\
&\leq -\gamma_k \|g({\bf z}_k)\|^2 + \frac{1}{N} \sum_{i\in \mathcal{I}}\frac{g({\bf z}_k)^\top{\bf d}_{k-i}}{\varphi_{k_i}} \|g({\bf z}_k)\|^2 \\
&\leq -\gamma_k \|g({\bf z}_k)\|^2 + \frac{1}{N}\sum_{i\in \mathcal{I}} \frac{\gamma_k g({\bf z}_k)^\top{\bf d}_{k-i}}{g({\bf z}_k)^\top{\bf d}_{k-i}+\|g({\bf z}_k)\| \|{\bf d}_{k-i}\|} \|g({\bf z}_k)\|^2\\
&\leq -\gamma_k \|g({\bf z}_k)\|^2 + \frac{1}{N}\sum_{i\in\mathcal{I}} \frac{\gamma_k g({\bf z}_k)^\top{\bf d}_{k-i}}{2g({\bf z}_k)^\top{\bf d}_{k-i}} \|g({\bf z}_k)\|^2 \\
&= -\gamma_k\|g({\bf z}_k)\|^2+\frac{1}{N}\sum_{i\in \mathcal{I}} \frac{\gamma_k}{2}\|g({\bf z}_k)\|^2 \\
&= -\gamma_k\|g({\bf z}_k)\|^2 \left(1-\frac{t}{2N} \right). \\
%&\leq -\frac{\gamma_k}{2} \|g({\bf z}_k)\|^2\\
%&\leq -\frac{\bar{\gamma}}{2} \|g({\bf z}_k)\|^2
%=-c\|g({\bf z}_k)\|^2,
\end{aligned}
$$
Since $0 \le t \le N$, we have $1 - \frac{t}{2N} \ge \frac{1}{2}$. Combining this with $\gamma_k \ge \bar{\gamma}$, it follows that
$$
g(\mathbf{z}_k)^\top \mathbf{d}_k \le -\frac{\gamma_k}{2} \|g(\mathbf{z}_k)\|^2 \le -\frac{\bar{\gamma}}{2} \|g(\mathbf{z}_k)\|^2,
$$
which implies that the sufficient descent condition \eqref{3.5} holds with $c = \frac{\bar{\gamma}}{2}$.
\qed

Note that condition \eqref{3.5} is stronger than \eqref{3.4}.
Now, we present the MGM, in which the step size $\alpha_k>0$ is determined by the Wolfe line search conditions
\begin{equation}\label{3.7}
\Phi({\bf z}_k+\alpha_k{\bf d}_k)-\Phi({\bf z}_k)\leq \rho\alpha_k g({\bf z}_k)^\top{\bf d}_k,
\end{equation}
\begin{equation}\label{3.8}
g({\bf z}_k+\alpha_k{\bf d}_k)^\top{\bf d}_k\geq \sigma g({\bf z}_k)^\top{\bf d}_k,
\end{equation}
where $0<\rho<\sigma<1.$

\medskip
\medskip
\medskip
\vspace{4mm}
\begin{tabular}{@{}l@{}}
\hline
 \multicolumn{1}{c}{\bf Algorithm 2. MGM } \\
\hline
{\bf Input:} Function $\Phi(\mathbf{z})$ (problem (2.1) or (2.5)), shift parameter $t \geq 0$ (set $t=0$ for\\ problem (2.1)), $0<\epsilon\ll1$, $\gamma_0>0$, $N>0$, $0<\rho<\sigma<1$.\\
\quad Step 0. Choose a random initial point ${\bf z}_0= ({\bf x}_0^\top, {\bf y}_0^\top)^\top\in\mathbb{R}^{m+n}$. Compute $g(\mathbf{z}_{0})$. If \\$\|g({\bf z}_0)\|\leq\epsilon$, stop; otherwise, set ${\bf d}_0=-\gamma_0 g({\bf z}_0)$ and $k=0$. \\
\quad Step 1. Determine the step size $\alpha_k>0$ satisfying \eqref{3.7} and \eqref{3.8}.\\
\quad Step 2. Update ${\bf z}_{k+1}={\bf z}_k+\alpha_k{\bf d}_k$. If $\|g({\bf z}_{k+1})\|\leq\epsilon$, set ${\bf z}_k={\bf z}_{k+1}$, stop. Otherwise,\\ calculate $\xi_k = \sqrt{\|\mathbf{y}_{k+1}\|/\|\mathbf{x}_{k+1}\|}$.
Update $\mathbf{x}_{k+1} \leftarrow \xi_k \mathbf{x}_{k+1}$ and $\mathbf{y}_{k+1} \leftarrow (1/\xi_k)\mathbf{y}_{k+1}$. Go\\ to Step 3. \\
\quad Step 3. Compute $\gamma_{k+1}$ and $\varphi_{(k+1)_i}$ satisfying (\ref{3.2}), then compute $\beta_{(k+1)_i}$ by (\ref{3.3}).\\
\quad Step 4. Compute ${\bf d}_{k+1}$ by (\ref{3.1}). Set $k\leftarrow k+1$ and return to Step 1.\\
%\quad Step 2. Compute $\gamma_k>0$ and $\varphi_{k_i}$ that satisfy \eqref{3.2}, define $\beta_{k_i}$ by \eqref{3.3} and generate \\${\bf d}_k$ by \eqref{3.1}.\\
%\quad Step 3. Compute $\alpha_k>0$ satisfying \eqref{3.7} and \eqref{3.8}.\\
%\quad Step 4. Update ${\bf z}_{k+1}={\bf z}_k+\alpha_k{\bf d}_k$. Set $k=k+1$. If $\|g({\bf z}_k)\|\leq\varepsilon$, then stop, output \\${\bf z}_k=(\x_k^\top,\y_k^\top)^\top$, otherwise, return Step 2.\\
{\bf Output:} ${\bf z}_k= \left({{\bf x}_k}^\top, {{\bf y}_k}^\top\right)^\top$.
M-eigenvalue $\lambda=({{\bf x}_k}^\top{\bf x}_k)({{\bf y}_K}^\top{\bf y}_k)-t$; associated \\left and right eigenvectors $\x_k/\|\x_k\|$ and $\y_k/\|\y_k\|$.\\
\hline
\end{tabular}
\vspace{4mm}
\medskip
\begin{Remark}
It is crucial to observe that the objective function $\Phi(\mathbf{z})$ is invariant under the transformation $\mathbf{x} \to \xi\mathbf{x}$ and $\mathbf{y} \to \xi^{-1} \mathbf{y}$ for any $\xi \neq 0$. Consequently, the generated sequence $\{\mathbf{z}_k\}$ could theoretically be unbounded where $\|\mathbf{x}\| \to \infty$ and $\|\mathbf{y}\| \to 0$. In Algorithm 2, the rescaling step restricts the iterates to satisfy $\|\mathbf{x}\| = \|\mathbf{y}\|$. This operation guarantees the compactness required for the convergence analysis (see Lemma \ref{lem1}) without altering the objective function value or the directions of the M-eigenvectors.
\end{Remark}

To prove the global convergence of the proposed method, we first establish several key lemmas.

\begin{lemma}
\label{lem1}
%Let $\{\mathbf{z}_k\}$ be the sequence generated by MGM. Let the level set be $\Omega = \{ \mathbf{z} \in \mathbb{R}^{m+n} \mid \Phi(\mathbf{z}) \le c \}$, where $c = \Phi(\mathbf{z}_0) < 0$. Then, $\Omega$ is bounded. Specifically, there exist constants $\eta_1, \eta_2 > 0$ such that for all $\mathbf{z}=(\mathbf{x}^\top, \mathbf{y}^\top)^\top \in \Omega$, it holds that $\|\mathbf{x}\| \le \eta_1$ and $\|\mathbf{y}\| \le \eta_2$.
Let $\{\mathbf{z}_k\}$ be the sequence generated by MGM. Then, the sequence $\{\mathbf{z}_k\}$ is bounded. Specifically, there exist constants $\eta_1, \eta_2 > 0$ such that $\|\mathbf{x}_k\| \le \eta_1$ and $\|\mathbf{y}_k\| \le \eta_2$ for all $k \ge 0$.
\end{lemma}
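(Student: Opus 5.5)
The plan has two ingredients. First, the Armijo condition \eqref{3.7} together with Theorem \ref{them3.1} shows that $\{\Phi(\mathbf{z}_k)\}$ is nonincreasing. Second, the rescaling in Step 2 of Algorithm 2 turns a bound on the product $\|\mathbf{x}_k\|\|\mathbf{y}_k\|$ into separate bounds on each factor. The obstacle is that $\Phi$ is not coercive: its level sets contain the unbounded families $(\xi\mathbf{x},\xi^{-1}\mathbf{y})$. So boundedness must come from the coercivity of $\Phi$ in the single variable $s=\|\mathbf{x}\|^2\|\mathbf{y}\|^2$, combined with the normalization $\|\mathbf{x}_k\|=\|\mathbf{y}_k\|$.

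\emph{Monotonicity.} By Theorem \ref{them3.1}, $g(\mathbf{z}_k)^\top\mathbf{d}_k<0$, and $\alpha_k>0$, so \eqref{3.7} gives $\Phi(\mathbf{z}_{k+1})\le\Phi(\mathbf{z}_k)$ before rescaling. The rescaling $\mathbf{x}\mapsto\xi_k\mathbf{x}$, $\mathbf{y}\mapsto\xi_k^{-1}\mathbf{y}$ leaves $\mathbf{x}^\top\mathbf{x}\,\mathbf{y}^\top\mathbf{y}$ and $\mathcal{A}\mathbf{x}\mathbf{y}\mathbf{x}\mathbf{y}$ unchanged, since the latter is quadratic in each of $\mathbf{x}$ and $\mathbf{y}$. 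Hence $\Phi$ is unchanged as well, as noted in the Remark. Inductively, $\Phi(\mathbf{z}_k)\le\Phi(\mathbf{z}_0)=:c_0$ for all $k$.

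\emph{Bound on $s_k=\|\mathbf{x}_k\|^2\|\mathbf{y}_k\|^2$.} Let $C=\max_{\|\mathbf{u}\|=\|\mathbf{v}\|=1}|\mathcal{A}\mathbf{u}\mathbf{v}\mathbf{u}\mathbf{v}|$, as in the proof of Theorem \ref{th2.1}(1). Arguing as there, with the shift term included,
\[
\Phi(\mathbf{z})\ \ge\ \tfrac14 s^2-\tfrac{C+t}{2}\,s,\qquad s=\|\mathbf{x}\|^2\|\mathbf{y}\|^2,
\]
where $t\ge0$ is the shift ($t=0$ for \eqref{2.1}). The right-hand side is a quadratic in $s$ tending to $+\infty$. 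From $\tfrac14 s_k^2-\tfrac{C+t}{2}s_k\le c_0$ we therefore obtain
\[
s_k\le S:=(C+t)+\sqrt{(C+t)^2+4\max\{c_0,0\}}.
\]

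\emph{Individual bounds.} For $k\ge1$, the rescaling in Step 2 enforces $\|\mathbf{x}_k\|=\|\mathbf{y}_k\|$. Indeed, with $\xi_k^2=\|\mathbf{y}_{k}\|/\|\mathbf{x}_{k}\|$ evaluated before rescaling, both norms become $\sqrt{\|\mathbf{x}_k\|\|\mathbf{y}_k\|}$. Then $\|\mathbf{x}_k\|^4=s_k\le S$, so $\|\mathbf{x}_k\|,\|\mathbf{y}_k\|\le S^{1/4}$. The rescaling is only defined when $\mathbf{x}_{k}\neq\mathbf{0}$. If $\mathbf{x}_k$ or $\mathbf{y}_k$ vanishes, I would note that this is excluded, or harmless: in the shifted setting the iterates have $\Phi<0$, which forces both factors to be nonzero. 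I would state this assumption explicitly. Finally, set $\eta_1=\max\{S^{1/4},\|\mathbf{x}_0\|\}$ and $\eta_2=\max\{S^{1/4},\|\mathbf{y}_0\|\}$ to cover $k=0$.
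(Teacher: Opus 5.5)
Your proposal is correct and follows essentially the same route as the paper: you get monotonicity of $\Phi(\mathbf{z}_k)$ from the descent property, the Armijo condition and the invariance of $\Phi$ under the rescaling; then the lower bound $\Phi \ge \frac14 s^2 - \frac{C+t}{2}s$ in the product variable $s=\|\mathbf{x}\|^2\|\mathbf{y}\|^2$; and finally the normalization $\|\mathbf{x}_k\|=\|\mathbf{y}_k\|$ to split the bound. The differences are minor improvements: you give an explicit bound $S$ instead of the paper's contradiction via a subsequence with $s_{k_j}\to\infty$, and you explicitly handle the unrescaled initial point $\mathbf{z}_0$ and the well-definedness of $\xi_k$, both of which the paper glosses over by asserting $\|\mathbf{x}_k\|=\|\mathbf{y}_k\|$ for all $k\ge 0$.
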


\proof
Since $\{\mathbf{z}_k\}$ is generated by Algorithm 2, the descent property ensures $\Phi(\mathbf{z}_k) \le \Phi(\mathbf{z}_0)$ for all $k$. Moreover, the algorithm enforces $\|\mathbf{x}_k\| = \|\mathbf{y}_k\|$. Let $s_k = \|\mathbf{x}_k\|\|\mathbf{y}_k\|$. It follows that $\|\mathbf{x}_k\|^2 = \|\mathbf{y}_k\|^2 = s_k$, and consequently $\|\mathbf{z}_k\|^2 = 2s_k$.

Recall the objective function $\Phi(\mathbf{z})$ defined in \eqref{2.5} (which covers the original problem when $t=0$).
$$
\Phi(\mathbf{z}_k) = \frac{1}{4}\|\mathbf{x}_k\|^4\|\mathbf{y}_k\|^4 - \frac{1}{2}\mathcal{A}\mathbf{y}_k\mathbf{x}_k\mathbf{y}_k - \frac{t}{2}\|\mathbf{x}_k\|^2\|\mathbf{y}_k\|^2.
$$

By Theorem 2.1(1), $|\mathcal{A}\mathbf{x}_k\mathbf{y}_k\mathbf{x}_k\mathbf{y}_k| \le C \|\mathbf{x}_k\|^2 \|\mathbf{y}_k\|^2 = C s_k^2$ for some constant $C > 0$. Consequently,
$$
\Phi(\mathbf{z}_k) \ge \frac{1}{4}s_k^4 - \frac{C}{2}s_k^2 - \frac{t}{2}s_k^2
= \frac{1}{4}s_k^4 - \frac{C+t}{2}s_k^2.
$$

Suppose for contradiction that the sequence $\{\mathbf{z}_k\}$ is unbounded. Then there exists a subsequence $\{\mathbf{z}_{k_j}\}$ such that $\|\mathbf{z}_{k_j}\| \to \infty$ as $j \to \infty$, which is equivalent to $s_{k_j} \to \infty$.
Thus, we have
$$
\lim_{j \to \infty} \Phi(\mathbf{z}_{k_j}) \ge \lim_{s_{k_j} \to \infty} \left( \frac{1}{4}s_{k_j}^4 - \frac{C+t}{2}s_{k_j}^2 \right) = +\infty.
$$
This implies that for sufficiently large $j$, $\Phi(\mathbf{z}_{k_j}) > \Phi(\mathbf{z}_0)$, which leads to a contradiction. Therefore, the sequence $\{\mathbf{z}_k\}$ is bounded. Consequently, there exist constants $\eta_1, \eta_2 > 0$ such that $\|\mathbf{x}_k\| \le \eta_1$ and $\|\mathbf{y}_k\| \le \eta_2$ for all $k \ge 0$.
\qed

For the subsequent convergence analysis, define
\begin{equation}\label{1}
\mathcal{D} = \left\{ \mathbf{z}=(\mathbf{x}^\top, \mathbf{y}^\top)^\top \in \mathbb{R}^{m+n} \mid \|\mathbf{x}\| \le \eta_1, \|\mathbf{y}\| \le \eta_2 \right\},
\end{equation}
where $\eta_1$ and $\eta_2$ are constants from Lemma \ref{lem1}.

%However, we restrict our analysis to the level set where $\Phi(\mathbf{z}) \le c$. This contradicts the limit $+\infty$. Thus, $\|\mathbf{z}\|$ cannot approach infinity, implying there exists an upper bound $B_{\max}$ such that $s \le B_{\max}$.
%
%\textbf{Case 2: $\|\mathbf{z}\| \to 0$ (i.e., $s \to 0$).}
%As $s \to 0$, the function value approaches zero:
%$$
%\lim_{\|\mathbf{z}\| \to 0} \Phi(\mathbf{z}) = \lim_{s \to 0} \left( \frac{1}{4}s^4 - \frac{C+t}{2}s^2 \right) = 0.
%$$
%However, the level set is defined by $\Phi(\mathbf{z}) \le c < 0$. Since $0 > c$, the points approaching the origin (where $\Phi \approx 0$) are strictly excluded from $\Omega$. This implies there exists a lower bound $B_{\min} > 0$ such that $s \ge B_{\min}$.
%
%\textbf{Conclusion:}
%Combining Case 1 and Case 2, we have $B_{\min} \le s \le B_{\max}$. Since $\|\mathbf{z}\| = \sqrt{2s}$, it follows that
%$$
%\sqrt{2B_{\min}} \le \|\mathbf{z}\| \le \sqrt{2B_{\max}}.
%$$
%Therefore, the level set $\Omega$ is bounded (and bounded away from zero).

\bl\label{lema3.1} If $\mathcal{A} \in \mathbb{H}^{m \times n \times m \times n}$, then $\mathcal{A}\mathbf{y}\mathbf{x}\mathbf{y}$ and $\mathcal{A}\mathbf{x}\mathbf{y}\mathbf{x}$ are Lipschitz continuous on $\mathcal{D}$.
\el
\proof Let $M = \max_{i,j,k,l} |a_{ijkl}|$. First, we establish the Lipschitz continuity of $\mathcal{A}\y\x\y$. For any $(\x,\y),(\x',\y)\in\mathcal{D}$, we have
\begin{equation}\label{3.9}
\begin{aligned}
\|\mathcal{A}\mathbf{y}\mathbf{x}\mathbf{y} - \mathcal{A}\mathbf{y}\mathbf{x}'\mathbf{y}\|
&= \left\| \left( \sum_{k=1}^m \sum_{j,l=1}^n a_{ijkl} \, y_j \, (x_k - x_k') \, y_l \right)_{i=1}^m \right\|\\
&\le \sqrt{m} M \sum_{j=1}^n |y_j| \sum_{l=1}^n |y_l| \sum_{k=1}^m |x_k - x_k'| \\
&\le \sqrt{m} M \left(\sqrt{n}\|\mathbf{y}\|\right) \left(\sqrt{n}\|\mathbf{y}\|\right) (\sqrt{m}\|\mathbf{x} - \mathbf{x}'\|) \\
&\le mn M \eta_2^2 \|\mathbf{x} - \mathbf{x}'\| = P_1 \|\mathbf{x} - \mathbf{x}'\|,
\end{aligned}
\end{equation}
where $P_1 = mn M \eta_2^2$ is a positive constant.

Next, for $(\mathbf{x}', \mathbf{y}), (\mathbf{x}', \mathbf{y}') \in \mathcal{D}$, utilizing the inequality $|ab - a'b'| \le |a||b-b'| + |b'||a-a'|$, we obtain
\begin{equation}\label{3.11}
\begin{aligned}
\|\mathcal{A}\mathbf{y}\mathbf{x}'\mathbf{y} - \mathcal{A}\mathbf{y}'\mathbf{x}'\mathbf{y}'\|
&= \left\| \left( \sum_{k=1}^m \sum_{j,l=1}^n a_{ijkl} \, x'_k \, (y_jy_l-y'_jy'_l) \, \right)_{i=1}^m \right\|\\
&\le \sqrt{m} M \sum_{k=1}^m |x_k'| \sum_{j,l=1}^n \left(|y_j||y_l - y_l'| + |y_l'||y_j - y_j'|\right) \\
&\le \sqrt{m} M \left(\sqrt{m}\eta_1\right) \left(2\sqrt{n}\eta_2 \sqrt{n}\|\mathbf{y} - \mathbf{y}'\|\right) \\
&= 2mn M \eta_1 \eta_2 \|\mathbf{y} - \mathbf{y}'\| = P_2 \|\mathbf{y} - \mathbf{y}'\|,
\end{aligned}
\end{equation}
where $P_2 = 2mn M \eta_1 \eta_2$ is a positive constant. Thus, $\mathcal{A}\y\x\y$ is Lipschitz continuous with respect to both $\mathbf{x}$ and $\mathbf{y}$.

Similarly, we analyze $\mathcal{A}\mathbf{x}\mathbf{y}\mathbf{x}$. For $(\mathbf{x}, \mathbf{y}), (\mathbf{x}', \mathbf{y}) \in \mathcal{D}$, it follows that
\begin{equation}\label{3.10}
\begin{aligned}
\|\mathcal{A}\x\y\x-\mathcal{A}\x'\y\x'\|
&= \left\| \left( \sum_{i,k=1}^m \sum_{j=1}^n a_{ijkl} \, y_j \, (x_ix_k -x'_i x_k') \right)_{l=1}^n \right\|\\
&\le \sqrt{n} M \sum_{j=1}^n |y_j| \sum_{i,k=1}^m \left(|x_k||x_i - x_i'| + |x_i'||x_k - x_k'|\right) \\
&\le \sqrt{n} M \left(\sqrt{n}\eta_2\right) \left(2\sqrt{m}\eta_1 \sqrt{m}\|\mathbf{x} - \mathbf{x}'\|\right) \\
&= P_2 \|\mathbf{x} - \mathbf{x}'\|.
\end{aligned}
\end{equation}

For $(\x',\y),(\x',\y')\in\mathcal{D}$, it holds that
\begin{equation}\label{3.12}
\begin{aligned}
\|\mathcal{A}\x'\y\x'-\mathcal{A}\x'\y'\x'\|
&= \left\| \left( \sum_{i,k=1}^m \sum_{j=1}^n a_{ijkl} \, x'_i \, (y_j -y_j') x'_k \right)_{l=1}^n \right\|\\
&\le \sqrt{n} M \sum_{i,k=1}^m |x_i' x_k'| \sum_{j=1}^n |y_j - y_j'| \\
&\le \sqrt{n} M \left(m\eta_1^2\right) \left(\sqrt{n}\|\mathbf{y} - \mathbf{y}'\|\right)\\
&= mn M \eta_1^2 \|\mathbf{y} - \mathbf{y}'\| = P_3 \|\mathbf{y} - \mathbf{y}'\|,
\end{aligned}
\end{equation}
where $P_3 = mn M \eta_1^2$ is a positive constant.
By \eqref{3.9}-\eqref{3.12}, we conclude that both $\mathcal{A}\y\x\y$ and $\mathcal{A}\x\y\x$ are Lipschitz continuous on $\mathcal{D}$.
\qed

\begin{lemma}
\label{lema3.3}
Let $\mathcal{A} \in \mathbb{H}^{m \times n \times m \times n}$. Then, the gradient $g_t(\mathbf{z})$ of the shifted problem (2.5) is Lipschitz continuous on $\mathcal{D}$. Specifically, there exists a positive constant $L_t$ such that for all $\mathbf{z}, \mathbf{z}' \in \mathcal{D}$, the following inequality holds
\begin{equation}\label{3.11}
\|g_t(\mathbf{z}) - g_t(\mathbf{z}')\| \le L_t \|\mathbf{z} - \mathbf{z}'\|.
\end{equation}
In particular, when $t=0$, the same conclusion holds for \eqref{2.1}.
\end{lemma}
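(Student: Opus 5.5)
The plan is to split the gradient $g_t(\mathbf{z})=(g_{t_1}(\mathbf{x},\mathbf{y})^\top,g_{t_2}(\mathbf{x},\mathbf{y})^\top)^\top$ from \eqref{2.6}--\eqref{2.7} into three types of terms and bound each separately on $\mathcal{D}$. These are the quartic-norm terms $(\mathbf{x}^\top\mathbf{x})(\mathbf{y}^\top\mathbf{y})^2\mathbf{x}$ and $(\mathbf{x}^\top\mathbf{x})^2(\mathbf{y}^\top\mathbf{y})\mathbf{y}$, the tensor terms $\mathcal{A}\mathbf{y}\mathbf{x}\mathbf{y}$ and $\mathcal{A}\mathbf{x}\mathbf{y}\mathbf{x}$, and the shift terms $t(\mathbf{y}^\top\mathbf{y})\mathbf{x}$ and $t(\mathbf{x}^\top\mathbf{x})\mathbf{y}$. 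By the triangle inequality, $\|g_t(\mathbf{z})-g_t(\mathbf{z}')\|\le\|g_{t_1}(\mathbf{z})-g_{t_1}(\mathbf{z}')\|+\|g_{t_2}(\mathbf{z})-g_{t_2}(\mathbf{z}')\|$. It then suffices to show each term is Lipschitz in $\mathbf{z}$ on $\mathcal{D}$, using $\|\mathbf{x}-\mathbf{x}'\|,\|\mathbf{y}-\mathbf{y}'\|\le\|\mathbf{z}-\mathbf{z}'\|$.

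The tensor terms are handled by Lemma \ref{lema3.1}. I will insert the intermediate point $(\mathbf{x}',\mathbf{y})$ and write
\[
\mathcal{A}\mathbf{y}\mathbf{x}\mathbf{y}-\mathcal{A}\mathbf{y}'\mathbf{x}'\mathbf{y}'=(\mathcal{A}\mathbf{y}\mathbf{x}\mathbf{y}-\mathcal{A}\mathbf{y}\mathbf{x}'\mathbf{y})+(\mathcal{A}\mathbf{y}\mathbf{x}'\mathbf{y}-\mathcal{A}\mathbf{y}'\mathbf{x}'\mathbf{y}').
\]
Estimates \eqref{3.9} and \eqref{3.11} then give the bound $(P_1+P_2)\|\mathbf{z}-\mathbf{z}'\|$. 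The same splitting with \eqref{3.10} and \eqref{3.12} gives $(P_2+P_3)\|\mathbf{z}-\mathbf{z}'\|$ for $\mathcal{A}\mathbf{x}\mathbf{y}\mathbf{x}$.

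For the polynomial norm terms I will use the identity $abc-a'b'c'=(a-a')bc+a'(b-b')c+a'b'(c-c')$ repeatedly, together with $|\,\|\mathbf{x}\|^2-\|\mathbf{x}'\|^2|\le(\|\mathbf{x}\|+\|\mathbf{x}'\|)\|\mathbf{x}-\mathbf{x}'\|\le2\eta_1\|\mathbf{x}-\mathbf{x}'\|$ and the analogous bound in $\mathbf{y}$. For example, write
\[
\|\mathbf{x}\|^2\|\mathbf{y}\|^4\mathbf{x}-\|\mathbf{x}'\|^2\|\mathbf{y}'\|^4\mathbf{x}'
\]
as a telescoping sum of three differences. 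The factor $\|\mathbf{y}\|^4-\|\mathbf{y}'\|^4=(\|\mathbf{y}\|^2+\|\mathbf{y}'\|^2)(\|\mathbf{y}\|^2-\|\mathbf{y}'\|^2)$ is bounded by $4\eta_2^3\|\mathbf{y}-\mathbf{y}'\|$. Each piece is then bounded by a constant built from powers of $\eta_1,\eta_2$, times $\|\mathbf{z}-\mathbf{z}'\|$; for instance the first term gets a constant like $3\eta_1^2\eta_2^4+4\eta_1^3\eta_2^3$. The shift terms are treated identically, giving constants $t(2\eta_2\eta_1+\eta_2^2)$ and $t(2\eta_1\eta_2+\eta_1^2)$.

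Summing all the constants gives an explicit $L_t$ depending on $m,n,M,\eta_1,\eta_2,t$. Setting $t=0$ removes the shift contributions and yields the statement for \eqref{2.1}. The main obstacle is organizational rather than conceptual: keeping the telescoping bookkeeping clean for the degree-five terms so that every factor is controlled by $\eta_1$ or $\eta_2$ via the definition of $\mathcal{D}$ in \eqref{1}. A subtle point is that both $\mathbf{z}$ and $\mathbf{z}'$ must lie in $\mathcal{D}$, including the intermediate point $(\mathbf{x}',\mathbf{y})$. This holds because $\mathcal{D}$ is a product of balls, so the intermediate point is automatically in $\mathcal{D}$ and Lemma \ref{lema3.1} applies.
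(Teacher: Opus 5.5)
Your proposal is correct and follows essentially the same route as the paper: you split $g_t$ into $g_{t_1}$ and $g_{t_2}$ and pass through the intermediate point $(\mathbf{x}',\mathbf{y})$, so that the estimates $P_1,P_2,P_3$ of Lemma~\ref{lema3.1} control the tensor terms, and you bound the norm-polynomial and shift terms by powers of $\eta_1,\eta_2$ (times $t$). The only difference is cosmetic: you combine the two blocks via $\|g_t(\mathbf{z})-g_t(\mathbf{z}')\|\le\|g_{t_1}(\mathbf{z})-g_{t_1}(\mathbf{z}')\|+\|g_{t_2}(\mathbf{z})-g_{t_2}(\mathbf{z}')\|$ rather than the paper's $(a+b)^2\le 2(a^2+b^2)$, which only changes the value of $L_t$.
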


\proof
First, we estimate the Lipschitz constant for $g_{t_1}(\x,\y)$, it follows from \eqref{2.6} and Lemma \ref{lem1} that for all $(\x,\y)$, $(\x',\y')\in \Omega$, we have

$$
\begin{aligned}
&\|g_{t_1}(\x,\y)-g_{t_1}(\x',\y)\|\\
=&\|(\x^\top\x)(\y^\top\y)^2\x-\mathcal{A}\y\x\y-t(\y^\top\y)\x-(\x'^\top\x')(\y^\top\y)^2\x'+\mathcal{A}\y\x'\y+t(\y^\top\y)\x'\|\\
\leq&\|\mathcal{A}\y\x\y-\mathcal{A}\y\x'\y\|+\|(\x^\top\x)(\y^\top\y)^2\x-(\x'^\top\x')(\y^\top\y)^2
\x'\|+\|t(\y^\top\y)\x'-t(\y^\top\y)\x'\|\\
\leq& P_1\|\x-\x'\|+\|\y\|^4\|(\x^{\top}\x)\x-(\x^{\top}\x)\x'+(\x^{\top}\x)\x'-(\x'^\top\x')\x'\|
+|t|\|\y\|^2\|\x-\x'\|\\
\leq& P_1\|\x-\x'\|+ \|\y\|^4\left(\|\x\|^2\|\x-\x'\|+(\|\x\|+\|\x'\|)\|\x'\|(\|\x\|-\|\x'\|) \right)+|t|\|\y\|^2\|\x-\x'\|\\
\leq& \left( P_1 + \eta_2^4 (\eta_1^2 + \eta_1(\eta_1 + \eta_1)) + t \eta_2^2 \right) \|\mathbf{x} - \mathbf{x}'\| \\
%=&P_1\|\x-\x'\|+3\eta_1^2\eta_2^4\|\x-\x'\|+t\eta_2^2\|\x-\x'\|\\
=&L_{11}\|\x-\x'\|
%& P_1\|\x-\x'\|+P_3\|\x-\x'\|+P_5\|\x-\x'\|\\
%=&(P_1+P_3+P_5)\|\x-\x'\|=
\end{aligned}
$$
where $L_{11}=P_1+3\eta_1^2\eta_2^4+t\eta_2^2$ is a positive constant.
$$
\begin{aligned}
&\|g_{t_1}(\x',\y)-g_{t_1}(\x',\y')\|\\
=&\|(\x'^\top\x')(\y^\top\y)^2\x'-\mathcal{A}\y\x'\y-t(\y^\top\y)\x'-(\x'^\top\x')(\y'^\top\y')^2\x'
+\mathcal{A}\y'\x'\y'+t(\y'^\top\y')\x'\|\\
\leq&\|\mathcal{A}\y\x'\y-\mathcal{A}\y'\x'\y'\|+\|(\x'^\top\x')(\y^\top\y)^2\x'-(\x'^\top\x')(\y'^\top\y')^2
\x'\|+\|t(\y^\top\y)\x'-t(\y'^\top\y')\x'\|\\
\leq&P_2\|\y-\y'\|+\|\x'\|^3\|(\y^\top\y)^2-(\y'^\top\y')^2\|+\|t\x'\|\|\y^\top\y-\y'^\top\y'\| \\
\leq&P_2\|\y-\y'\|+\|\x'\|^3(\|\y\|^2+\|\y'\|^2)(\|\y\|+\|\y'\|)\|\y-\y'\|+t\|\x'\|\|\y+\y'\|\|\y-\y'\|\\
\leq& \left( P_2 + 4\eta_1^3 \eta_2^3 + 2t \eta_1 \eta_2 \right) \|\mathbf{y} - \mathbf{y}'\| = L_{12} \|\mathbf{y} - \mathbf{y}'\|.
\end{aligned}
$$

Therefore, it holds that
\begin{equation}\label{3.14}
\begin{aligned}
\|g_{t_1}(\x,\y)-g_{t_1}(\x',\y')\|&=\|g_{t_1}(\x,\y)-g_{t_1}(\x',\y)+g_{t_1}(\x',\y)-g_{t_1}(\x',\y')\|\\
&\leq\|g_{t_1}(\x,\y)-g_{t_1}(\x',\y')\|+\|g_{t_1}(\x',\y)-g_{t_1}(\x',\y')\|\\
&\leq L_{11}\|\x-\x'\|+L_{12}\|\y-\y'\|.
\end{aligned}
\end{equation}

Similarly, we estimate the Lipschitz constant for $g_{t_2}(\x,\y)$. From \eqref{2.7} and Lemma \ref{lem1}, we have
$$
\begin{aligned}
&\|g_{t_2}(\x,\y)-g_{t_2}(\x',\y)\|\\
=&\|(\x^\top\x)^2(\y^\top\y)\y-\mathcal{A}\x\y\x-t(\x^\top\x)\y-(\x'^\top\x')^2(\y^\top\y)\y
+\mathcal{A}\x'\y\x'+t(\x'^\top\x')\y\|\\
\leq&\|\mathcal{A}\x\y\x-\mathcal{A}\x'\y\x'\|+\|(\x^\top\x)^2(\y^\top\y)\y-(\x'^\top\x')^2(\y^\top\y)\y\|
+\|t(\x^\top\x)\y-t(\x'^\top\x')\y\|\\
\leq& P_2\|\x-\x'\|+ \|\y\|^3\left(\|\x\|^2+\|\x'\|^2 \right)(\|\x\|+\|\x'\|)\|\x-\x'\|+t\|\y\|(\|\x\|+\|\x'\|)\|\x-\x'\|\\
=&L_{21}\|\x-\x'\|,
\end{aligned}
$$
where $L_{21}=P_2+4\sqrt{n}\eta_1^3\eta_2^3+2t\eta_1\eta_2$ is a positive constant.
$$
\begin{aligned}
&\|g_{t_2}(\x',\y)-g_{t_2}(\x',\y')\|\\
=&\|(\x'^\top\x')^2(\y^\top\y)\y-\mathcal{A}\x'\y\x'-t(\x'^\top\x')\y-(\x'^\top\x')^2(\y'^\top\y')\y'
+\mathcal{A}\x'\y'\x'+t(\x'^\top\x')\y'\|\\
\leq&\|\mathcal{A}\x'\y\x'-\mathcal{A}\x'\y'\x'\|+\|(\x'^\top\x')^2(\y^\top\y)\y-(\x'^\top\x')^2(\y'^\top\y')\y'\|
+\|t(\x'^\top\x')\y-t(\x'^\top\x')\y'\|\\
%\leq& P_3\|\y-\y'\|+\|\x\|^4\|(\y^{\top}\y)\y-(\y^{\top}\y)\y'+(\y^{\top}\y)\y'-(\y'^\top\y')\y'\|
%+|t|\|\x\|^2\|\y-\y'\|\\
\leq& P_3\|\y-\y'\|+ \|\x'\|^4\left(\|\y\|^2\|\y-\y'\|+(\|\y\|+\|\y'\|)\|\y'\|(\|\y-\y'\|) \right)+t\|\x'\|^2\|\y-\y'\|\\
%\leq&P_3\|\y-\y'\|+\eta_1^4(\eta_2^2\|\y-\y'\|+2\eta_2^2\|\y-\y'\|)+|t|\|\x\|^2\|\y-\y'\|\\
=&L_{22}\|\y-\y'\|,
\end{aligned}
$$
where $L_{22}=P_3+3\eta_1^4\eta_2^2+t\eta_1^2$ is a positive constant.

Therefore, it holds that
\begin{equation}\label{3.15}
\begin{aligned}
\|g_{t_2}(\x,\y)-g_{t_2}(\x',\y')\|&=\|g_{t_2}(\x,\y)-g_{t_2}(\x',\y)+g_{t_2}(\x',\y)-g_{t_2}(\x',\y')\|\\
&\leq\|g_{t_2}(\x,\y)-g_{t_2}(\x',\y')\|+\|g_{t_2}(\x',\y)-g_{t_2}(\x',\y')\|\\
&\leq L_{21}\|\x-\x'\|+L_{22}\|\y-\y'\|.
\end{aligned}
\end{equation}

Combining the above, let $M_1 = \sqrt{2L_{11}^2 + 2L_{21}^2}$ and $M_2 = \sqrt{2L_{12}^2 + 2L_{22}^2}$. Using the inequality $(a+b)^2 \le 2(a^2+b^2)$, we have
\begin{align*}
\|g_t(\mathbf{z}) - g_t(\mathbf{z}')\|^2 &= \|g_{t_1}(\mathbf{z}) - g_{t_1}(\mathbf{z}')\|^2 + \|g_{t_2}(\mathbf{z}) - g_{t_2}(\mathbf{z}')\|^2 \\
&\le 2\left(L_{11}^2 + L_{21}^2\right)\|\mathbf{x}-\mathbf{x}'\|^2 + 2\left(L_{12}^2 + L_{22}^2\right)\|\mathbf{y}-\mathbf{y}'\|^2 \\
&\le \max\left(M_1^2, M_2^2\right) \left(\|\mathbf{x}-\mathbf{x}'\|^2 + \|\mathbf{y}-\mathbf{y}'\|^2\right) \\
&= L_t^2 \|\mathbf{z}-\mathbf{z}'\|^2.
\end{align*}
Taking the square root completes the proof with $L_t = \max\left(M_1, M_2\right)$.\qed
%By \eqref{3.14} and \eqref{3.15}, it holds that
%$$
%\begin{aligned}
%\|g_t(\x,\y)-g_t(\x',\y')\|&=\sqrt{\|g_{t_1}(\x,\y)-g_{t_1}(\x',\y')\|^2+\|g_{t_2}(\x,\y)-g_{t_2}(\x',\y')\|^2}\\
%&\leq\sqrt{(L_{11}\|\x-\x'\|+L_{12}\|\y-\y'\|)^2+(L_{21}\|\x-\x'\|+L_{22}\|\y-\y'\|)^2}\\
%&\leq\sqrt{2L_{11}\|\x-\x'\|^2+2L_{12}\|\y-\y'\|^2+2L_{21}\|\x-\x'\|^2+2L_{22}\|\y-\y'\|^2}\\
%&=\sqrt{\left(2{L_{11}}^2+2{L_{12}}^2\right)\|\x-\x'\|^2+\left(2{L_{21}}^2+2{L_{22}}^2\right)\|\y-\y'\|^2},
%\end{aligned}
%$$
%where the last inequality follows from $(a+b)^2\leq2(a^2+b^2)$.
%Denote $M_1=\sqrt{2{L_{11}}^2+2{L_{12}}^2}$ and $M_2=\sqrt{2{L_{21}}^2+2{L_{22}}^2}$, then
%$$
%\begin{aligned}
%\|g_t(\x,\y)-g_t(\x',\y')\|&\leq\sqrt{M_1^2\|\x-\x'\|^2+M_2^2\|\y-\y'\|^2}\\
%&\leq\sqrt{M^2_{max}\left(\|\x-\x'\|^2+\|\y-\y'\|^2\right)}\\
%&\leq\sqrt{\left(M_1^2+M_2^2\right)\left(\|\x-\x'\|^2+\|\y-\y'\|^2\right)}\\
%&=L\sqrt{\|\x-\x'\|^2+\|\y-\y'\|^2}=L\|(\x,\y)-(\x',\y')\|,
%\end{aligned}
%$$
%where $M_{max}=\max\{M_1,M_2\}$, $L=\sqrt{M_1^2+M_2^2}$.

Based on the lemmas established above, we now state the following well-known lemma which was proved by Zoutendijk \cite{ZO70}, which applies to general iterative methods.

\bl\label{lema3.4} Let the sequences $\{{\bf z}_k\}$ and $\{{\bf d}_k\}$ be generated by MGM. Then,
$$
\sum_{k=0}^\infty \frac{\left(g({\bf z}_k)^\top{\bf d}_k\right)^2}{\|{\bf d}_k\|^2}<\infty.
$$
\el
\proof From the Wolfe line search condition \eqref{3.8} and Lemma \ref{lema3.3}, it follows that
$$
(\sigma-1)g({\bf z}_k)^\top{\bf d}_k\leq\left(g({\bf z}_{k+1})-g({\bf z}_k)\right)^\top{\bf d}_k\leq L_t\alpha_k\|{\bf d}_k\|^2,
$$
which implies that
$$\alpha_k\geq\frac{(\sigma-1)g({\bf z}_k)^\top{\bf d}_k}{L\|{\bf d}_k\|^2}.$$
Given that $g({\bf z}_k)^\top{\bf d}_k<0$, it follows from \eqref{3.7} that
\begin{equation}\label{3.16}
\Phi(\mathbf{z}_{k+1}) - \Phi(\mathbf{z}_k) \le \rho \alpha_k g(\mathbf{z}_k)^\top \mathbf{d}_k \le \frac{\rho(\sigma - 1)}{L_t} \frac{\left(g(\mathbf{z}_k)^\top \mathbf{d}_k\right)^2}{\|\mathbf{d}_k\|^2}.
\end{equation}
It follows from \eqref{3.16} that the sequence $\{\Phi({\bf z}_k)\}$ is non-increasing. Moreover, by Theorem \ref{th2.1}, $\Phi({\bf z}_k)$ is bounded below. Thus, $\{\Phi({\bf z}_k)\}$ converges.
By taking a series for both sides of (\ref{3.16}), it holds that
$$
\sum_{k=0}^\infty \frac{\left(g({\bf z}_k)^\top{\bf d}_k\right)^2}{\|{\bf d}_k\|^2}<\infty.
$$
\qed

By Lemma \ref{lema3.4}, we establish the following theorem.
\bt\label{them3.3} Let the sequence $\{{\bf z}_k\}$ be generated by MGM. Then, the method either terminates at a stationary point or converges in the sense that
\begin{equation}\label{3.17}
\liminf\limits_{k\to\infty}\|g({\bf z}_k)\|=0
\end{equation}
\et
\proof
From \eqref{3.2}, it follows that
$$
\begin{aligned}
\sum_{i=1}^N \left(\gamma_k \varphi_{k_i}-g({\bf z}_k)^\top{\bf d}_{k-i}\right)\beta_{k_i}
&>(\gamma_k \varphi_{k_1}-\gamma_k\varphi_{k_1}) \beta_{k_1}+\sum_{i=2}^N \left(\gamma_k \varphi_{k_i}- g({\bf z}_k)^\top{\bf d}_{k-i}\right) \beta_{k_i} \\
&\geq \sum_{i=2}^N (\gamma_k \varphi_{k_i}-\gamma_k\varphi_{k_i}) \beta_{k_i}
= 0.
\end{aligned}
$$
Since all the assumptions of Theorem \ref{them3.1} are satisfied, search directions are descent, which implies that
$$
g({\bf z}_k)^\top{\bf d}_k< 0 \quad \text{for all } k.
$$
Therefore, it follows from \eqref{3.1} and \eqref{3.3} that
\begin{equation}\label{3.18}
\begin{aligned}
\left|g({\bf z}_k)^\top{\bf d}_k\right| &= -g({\bf z}_k)^\top{\bf d}_k \\
&= \gamma_k \|g({\bf z}_k)\|^2 - \frac{1}{N} \sum_{i=1}^N \beta_{k_i} g({\bf z}_k)^\top{\bf d}_{k-i} \\
&= \frac{1}{N} \sum_{i=1}^N \left( \gamma_k \|g({\bf z}_k)\|^2 - \beta_{k_i}g({\bf z}_k)^\top{\bf d}_{k-i} \right)  \\
&= \frac{1}{N} \sum_{i=1}^N \left( \gamma_k \varphi_{k_i}-g({\bf z}_k)^\top{\bf d}_{k-i}\right) \beta_{k_i}
> 0.
\end{aligned}
\end{equation}
Rewrite \eqref{3.3} as
$$
{\bf d}_k+\gamma_k g({\bf z}_k)=\frac{1}{N} \sum_{i=1}^N \beta_{k_i}{\bf d}_{k-i}.
$$
Squaring both sides and simplifying
$$
\|{\bf d}_k\|^2 = \left\| \frac{1}{N} \sum_{i=1}^N \beta_{k_i}{\bf d}_{k-i} \right\|^2-2\gamma_k g({\bf z}_k)^\top{\bf d}_{k}-\gamma_k^2 \|g({\bf z}_k)\|^2.
$$
Dividing both sides by $\left(g({\bf z}_k)^\top{\bf d}_k\right)^2$ and applying \eqref{3.18}, it holds that
\begin{equation}\label{3.19}
\begin{aligned}
\frac{\|{\bf d}_k\|^2}{\left(g({\bf z}_k)^\top{\bf d}_k\right)^2} &= \frac{\left\|\frac{1}{N} \sum_{i=1}^N \beta_{k_i} {\bf d}_{k-i}\right\|^2}{\left(g({\bf z}_k)^\top{\bf d}_k\right)^2}-2\gamma_k \frac{g({\bf z}_k)^\top{\bf d}_k}{\left(g({\bf z}_k)^\top{\bf d}_k\right)^2}-\gamma_k^2 \frac{\|g({\bf z}_k)\|^2}{(g({\bf z}_k)^\top{\bf d}_k)^2} \\
&= \frac{\left\|\frac{1}{N} \sum_{i=1}^N \beta_{k_i} {\bf d}_{k-i}\right\|^2}{(g({\bf z}_k)^\top{\bf d}_k)^2}-\frac{2\gamma_k}{g({\bf z}_k)^\top{\bf d}_k}-\gamma_k^2 \frac{\|g({\bf z}_k)\|^2}{(g({\bf z}_k)^\top{\bf d}_k)^2} \\
&= \frac{\left\|\frac{1}{N} \sum_{i=1}^N \beta_{k_i} {\bf d}_{k-i}\right\|^2}{(g({\bf z}_k)^\top{\bf d}_k)^2}-\left( \frac{1}{\|g({\bf z}_k)\|} + \gamma_k \frac{\|g({\bf z}_k)\|}{g({\bf z}_k)^\top{\bf d}_k} \right)^2 + \frac{1}{\|g({\bf z}_k)\|^2}\\
&\leq \frac{\left\|\frac{1}{N} \sum_{i=1}^N \beta_{k_i} {\bf d}_{k-i}\right\|^2}{(g({\bf z}_k)^\top{\bf d}_k)^2} + \frac{1}{\|g({\bf z}_k)\|^2} \\
%&= \left( \frac{\frac{1}{N} \sum_{i=1}^N \beta_{k_i} \|{\bf d}_{k-i}\|}{|g({\bf z}_k)^\top{\bf d}_k|} \right)^2 + \frac{1}{\|g({\bf z}_k)\|^2} \\
&= \left( \frac{\frac{1}{N} \sum_{i=1}^N \beta_{k_i} \|{\bf d}_{k-i}\|}{\frac{1}{N} \sum_{i=1}^N \beta_{k_i} (\gamma_k\varphi_{k_i}-g({\bf z}_k)^\top{\bf d}_{k-i})} \right)^2 + \frac{1}{\|g({\bf z}_k)\|^2}.
\end{aligned}
\end{equation}
Noting that $\gamma_k \varphi_{k_i} \geq g({\bf z}_k)^\top{\bf d}_{k-i}+ \|g({\bf z}_k)\| \|{\bf d}_{k-i}\|$ and multiplying this by $\beta_{k_i} \geq 0$, we have
$$
\beta_{k_i} \left(\gamma_k \varphi_{k_i}-g({\bf z}_k)^\top{\bf d}_{k-i}\right) \geq \beta_{k_i} \|g({\bf z}_k)\| \|{\bf d}_{k-i}\|.
$$
Summing the above inequality over $i$, we obtain
$$
\sum_{i=1}^N \beta_{k_i} \left(\gamma_k\varphi_{k_i}-g({\bf z}_k)^\top{\bf d}_{k-i}\right) \geq \|g({\bf z}_k)\| \sum_{i=1}^N \beta_{k_i} \|{\bf d}_{k-i}\|.
$$
It follows from \eqref{3.18} that \(\sum_{i=1}^N \beta_{k_i} (\gamma_k\varphi_{k_i} -g({\bf z}_k)^\top{\bf d}_{k-i})> 0\), and we obtain
\begin{equation}\label{3.20}
\frac{\sum_{i=1}^N \beta_{k_i} \|{\bf d}_{k-i}\|}{\sum_{i=1}^N \beta_{k_i} (\gamma_k \varphi_{k_i} -g({\bf z}_k)^\top{\bf d}_{k-i})} \leq \frac{1}{\|g({\bf z}_k)\|}.
\end{equation}
By \eqref{3.19} and \eqref{3.20}, we have
\begin{equation}\label{3.28}
\frac{\|{\bf d}_k\|^2}{\left(g_1(\x_k,\y_k)^\top{\bf d}_k\right)^2} \leq \frac{1}{\|g({\bf z}_k)\|^2} + \frac{1}{\|g({\bf z}_k)\|^2} = \frac{2}{\|g({\bf z}_k)\|^2} \quad \text{for all } k.
\end{equation}
If \eqref{3.17} is not true, there exists a constant $c_1 > 0$ such that
\begin{equation}\label{3.29}
\|g({\bf z}_k)\| \geq c_1 \quad \text{for all } k.
\end{equation}
Therefore, it follows from \eqref{3.28} and \eqref{3.29} that
$$
\frac{(g({\bf z}_k)^\top{\bf d}_k)^2}{\|{\bf d}_k\|^2} \geq \frac{c_1^2}{2}.
$$
Thus, we obtain
$$
\sum_{k=0}^\infty \frac{\left(g({\bf z}_k)^\top{\bf d}_k\right)^2}{\|{\bf d}_k\|^2} = \infty.
$$
This contradicts Lemma \ref{lema3.4}, confirming that (\ref{3.17}) holds.
\qed

This theorem implies that for any choices of $\gamma_k,\varphi_{k_i}$ ($i=1,\ldots, N$) satisfy condition (\ref{3.6}), global convergence of our method is achieved.

\section{Numerical results}

In this section, we conduct some computational experiments to evaluate the efficacy and stability of the proposed method for solving problem (\ref{2.1}) or (\ref{2.5}). We compare the MGM method with the SS-HOPM in \cite{WQZ09} and the SIPM in \cite{WCW23}, which have been reported to be efficient for unconstrained optimization. All numerical experiments were implemented in MATLAB 9.0 on a personal computer with AMD Ryzen 7 4800H CPU 2.90GHz and 16 GB random-access memory (RAM).

As established in Theorem \ref{them3.2}, the global convergence of the method is guaranteed for any parameters $\gamma_k,\varphi_{k_i}$ satisfying conditions (\ref{3.6}). In our experiments, we first selected $\gamma_k$,  and then determined corresponding values of $\varphi_{k_i}$ for $i = 1,\ldots, N$ such that (\ref{3.6}) holds. For all methods, we set $\bar{\gamma}=10^{-15}$. In particular, we adopted the following two choices for $\gamma_k$.
\begin{enumerate}
   \item[(i)] $\gamma_k=1$ for all $k$;
   \item[(ii)]$\gamma_0=1$, and for $k\geq1$
$$
\begin{aligned}
\gamma_k &=
\begin{cases}
1, & \text{if }~ \dfrac{\w_{k-1}^\top \s_{k-1}}{\w_{k-1}^\top \w_{k-1}} < \bar{\gamma}, \\
\dfrac{\w_{k-1}^\top \s_{k-1}}{\w_{k-1}^\top \w_{k-1}}, & \text{otherwise}.
\end{cases}
%\quad  % 增加水平间距
%\gamma_k' &=
%\begin{cases}
%1, & \text{if } ~\dfrac{\w_{k-1}'^\top \s_{k-1}'}{\w_{k-1}'^\top \w_{k-1}'} < \bar{\gamma}, \\
%\dfrac{\w_{k-1}'^\top \s_{k-1}'}{\w_{k-1}'^\top \w_{k-1}'}, & \text{otherwise}.
%\end{cases}
\end{aligned}
$$
\end{enumerate}
To clarify the adaptive rules, we define
$$\s_{k-1}={\bf z}_k-{\bf z}_{k-1}, {\bf t}_{k-1}=g({\bf z}_k)-g({\bf z}_{k-1}).$$
The modified vectors are defined as
$$\w_{k-1}={\bf t}_{k-1}+\frac{\theta_{k-1}}{\s_{k-1}^\top\boldsymbol{\xi}_{k-1}},$$
where $\boldsymbol{\xi}_{k-1}$ is any vectors satisfying $\s_{k-1}^\top\boldsymbol{\xi}_{k-1}\neq0$, ensuring  that the division is well-defined. The history-dependent correction coefficient is given by
$$\theta_{k-1}=6(f({\bf z}_k)-f({\bf z}_{k-1}))+
3(g({\bf z}_k)-g({\bf z}_{k-1}))^\top\s_{k-1}.$$

The choice of $\gamma_k$ in (ii) is motivated by the sizing technique of the modified secant condition introduced by Zhang et al. \cite{ZX01,ZD99}. For a given
$\gamma_k$, we define $\varphi_{k_i}$ for $i = 1,\ldots,N$ as follows
$$
\varphi_{k_i} = \dfrac{\|g({\bf z}_k)\|\|{\bf d}_{k-i}\|+g({\bf z}_k)^\top {\bf d}_{k-i}+m+ n}{\gamma_k}.
$$
This choice of $\varphi_{k_i}$ satisfies condition (\ref{3.6}).
We denote the MGM method with $\gamma_k$ from (i) and (ii) as MGM-1 and MGM-2, respectively.

In the implementation of the MGM algorithm, we set parameters $\epsilon=10^{-6},\rho=0.1,\sigma=0.5$. The maximum number of allowed outer iterations was set to 2000.

\subsection{Analysis of $\gamma_k$ and $N$}
To evaluate the impact of $\gamma_k$ on algorithmic performance, we consider the number of iterations and CPU time as key metrics. We employ the performance profile introduced by \cite{DM02} by Dolan and Mor$\acute{e}$ to analyze the performance of the MGM. Let $Y$ and $W$ be the sets of methods and test problems, $n_y, n_w$ be the number of methods and test problems, respectively. The performance profile $\phi:\mathbb{R}\to [0,1]$ is defined for each $y\in Y$ and $w\in W $ such that $b_{w,y}>0$ represents the number of iterations (or CPU time) required to solve problem $w$ by method $y$.

Furthermore, the performance profile is given by
$$
\phi_y(\tau) = \frac{1}{n_w} size \left\{ w \in W : r_{w,y} \leq \tau \right\},
$$
where $\tau > 0 $, $\text{size}\{\cdot\}$ denotes the number of elements in a set, and $r_{w,y}$ is the performance ratio defined as:
$$
r_{w,y} = \frac{b_{w,y}}{\min \left\{ b_{w,y} : y \in Y \right\}}.
$$
Therefore, the performance profile is visualized by plotting the cumulative distribution function  $\phi_y$. Notably, $\phi_y(1)$ indicates the probability that the solver outperforms all other solvers. The right side of the image for $\phi_y$ shows the robustness associated with a solver.

To discuss the impact of the choice of $\gamma_k$ on algorithm performance, we compare MGM-1 and MGM-2 based on the number of iterations and CPU time for the same value of $N$. We test them with values $N=1,3,5,7,9$.
\begin{figure}[htbp]
    \centering
    % 第一行子图（Iterations）
    \begin{subfigure}[b]{0.19\textwidth}
        \centering
        \includegraphics[width=\textwidth]{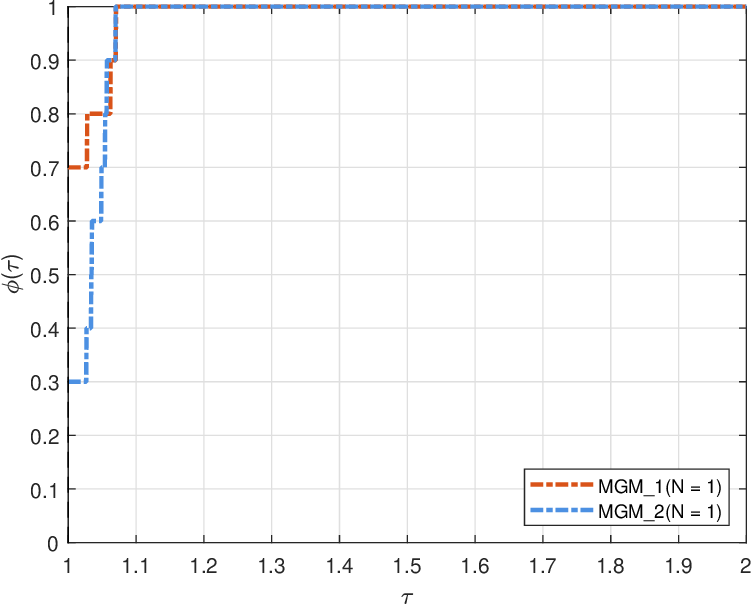} % 假设对应 N=1 的子图，按需改文件名
       % \caption*{MMG-I1 ($N=1$) vs MMG-I2 ($N=1$)}
    \end{subfigure}
    \begin{subfigure}[b]{0.19\textwidth}
        \centering
        \includegraphics[width=\textwidth]{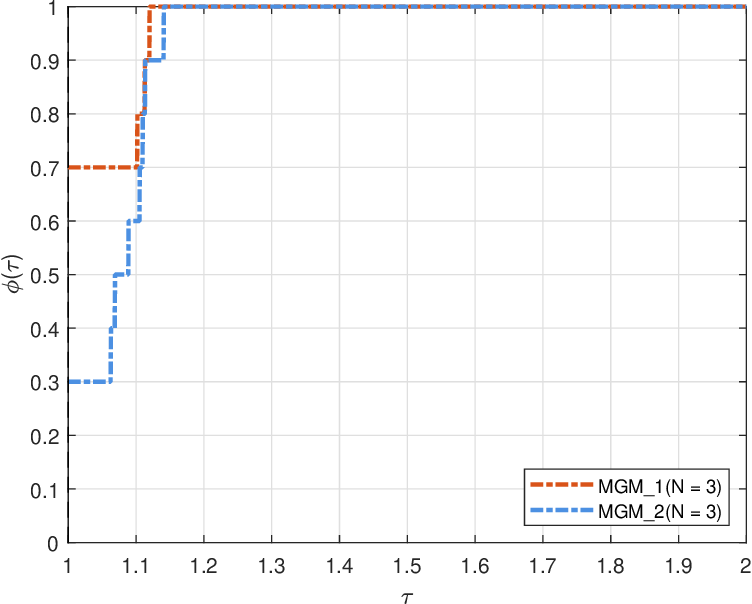} % 对应 N=3 的子图
        %\caption*{MMG-I1 ($N=3$) vs MMG-I2 ($N=3$)}
    \end{subfigure}
    \begin{subfigure}[b]{0.19\textwidth}
        \centering
        \includegraphics[width=\textwidth]{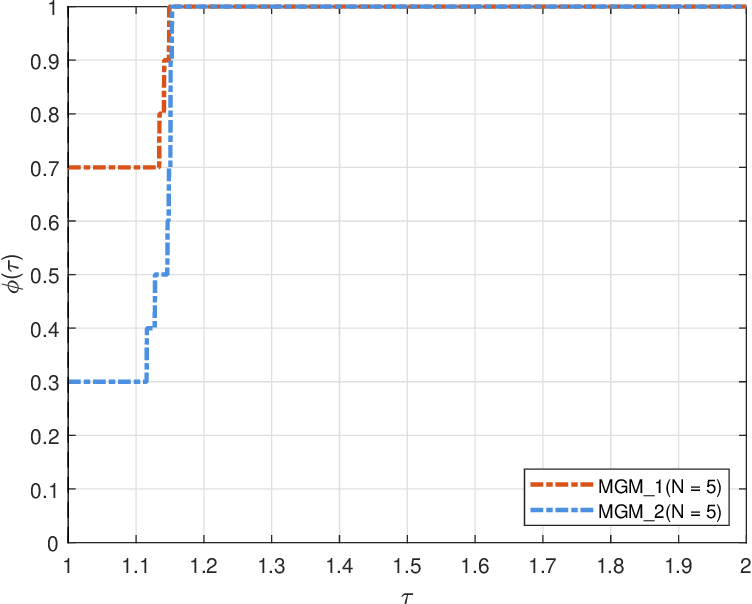} % 对应 N=5 的子图
        %\caption*{MMG-I1 ($N=5$) vs MMG-I2 ($N=5$)}
    \end{subfigure}
    \begin{subfigure}[b]{0.19\textwidth}
        \centering
        \includegraphics[width=\textwidth]{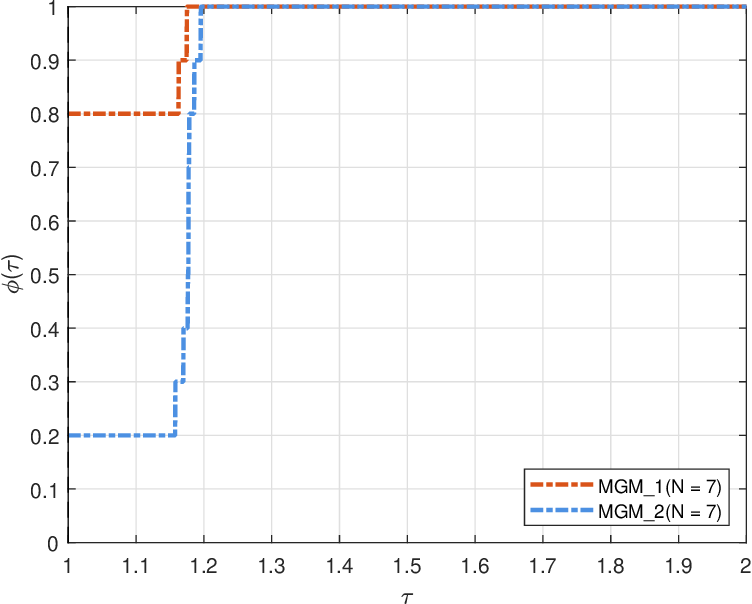} % 对应 N=7 的子图
        %\caption*{MMG-I1 ($N=7$) vs MMG-I2 ($N=7$)}
    \end{subfigure}
    \begin{subfigure}[b]{0.19\textwidth}
        \centering
        \includegraphics[width=\textwidth]{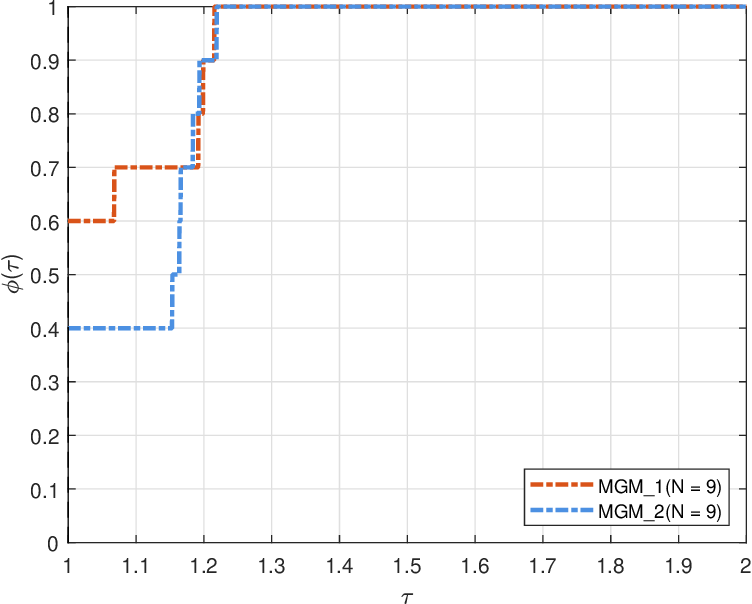} % 对应 N=9 的子图
        %\caption*{MMG-I1 ($N=9$) vs MMG-I2 ($N=9$)}
    \end{subfigure}
    \caption*{(a)~Iterations}

    % 第二行子图（Function evaluations）
    \vspace{0.5cm} % 增加垂直间距
    \begin{subfigure}[b]{0.19\textwidth}
        \centering
        \includegraphics[width=\textwidth]{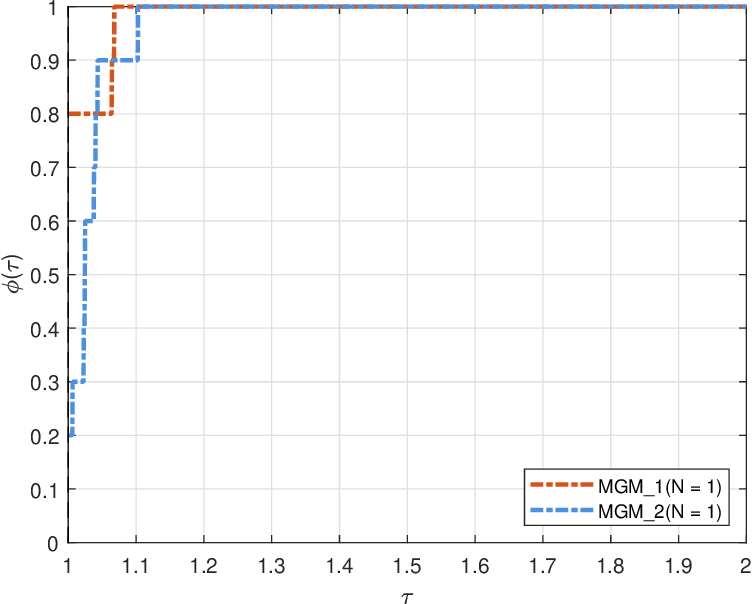} % 假设对应 N=1 函数评估的子图
        %\caption*{MMG-I1 ($N=1$) vs MMG-I2 ($N=1$)}
    \end{subfigure}
    \begin{subfigure}[b]{0.19\textwidth}
        \centering
        \includegraphics[width=\textwidth]{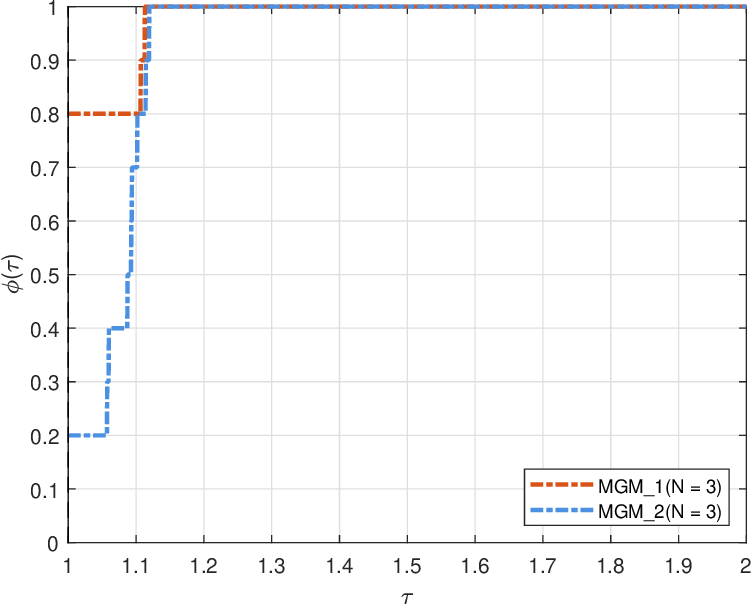} % 对应 N=3 函数评估的子图
        %\caption*{MMG-I1 ($N=3$) vs MMG-I2 ($N=3$)}
    \end{subfigure}
    \begin{subfigure}[b]{0.19\textwidth}
        \centering
        \includegraphics[width=\textwidth]{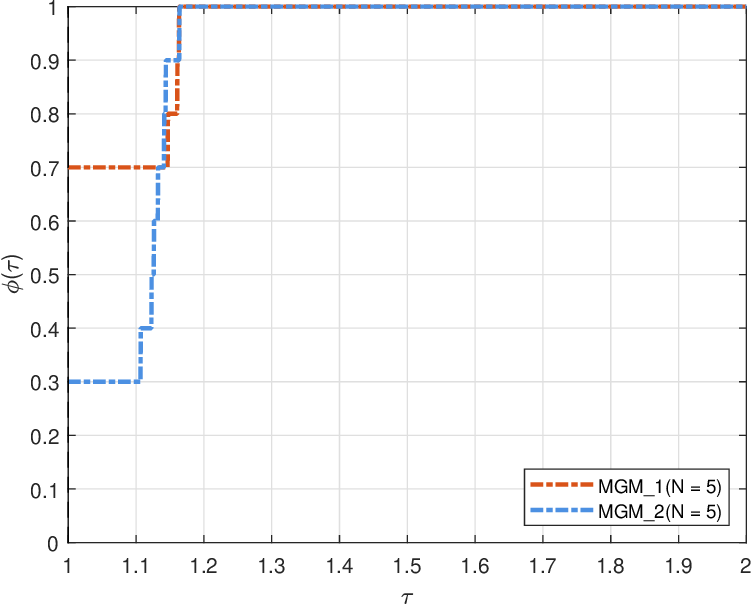} % 对应 N=5 函数评估的子图
        %\caption*{MMG-I1 ($N=5$) vs MMG-I2 ($N=5$)}
    \end{subfigure}
    \begin{subfigure}[b]{0.19\textwidth}
        \centering
        \includegraphics[width=\textwidth]{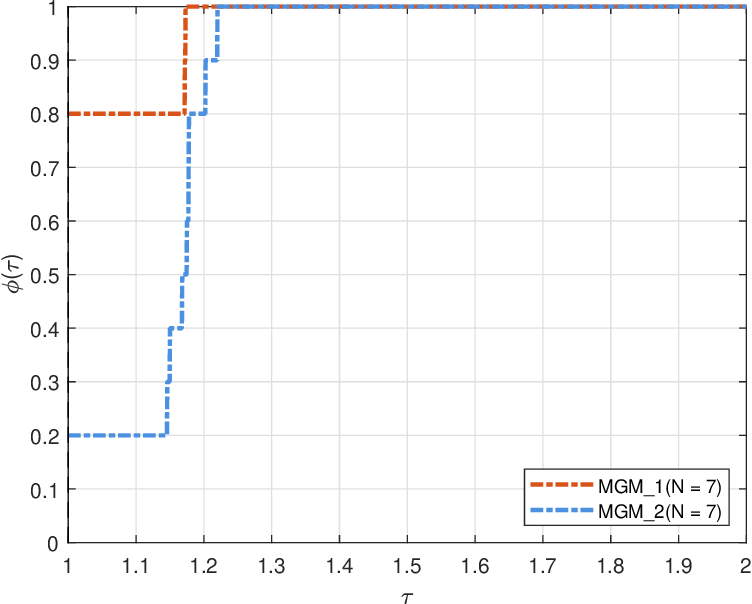} % 对应 N=7 函数评估的子图
        %\caption*{MMG-I1 ($N=7$) vs MMG-I2 ($N=7$)}
    \end{subfigure}
    \begin{subfigure}[b]{0.19\textwidth}
        \centering
        \includegraphics[width=\textwidth]{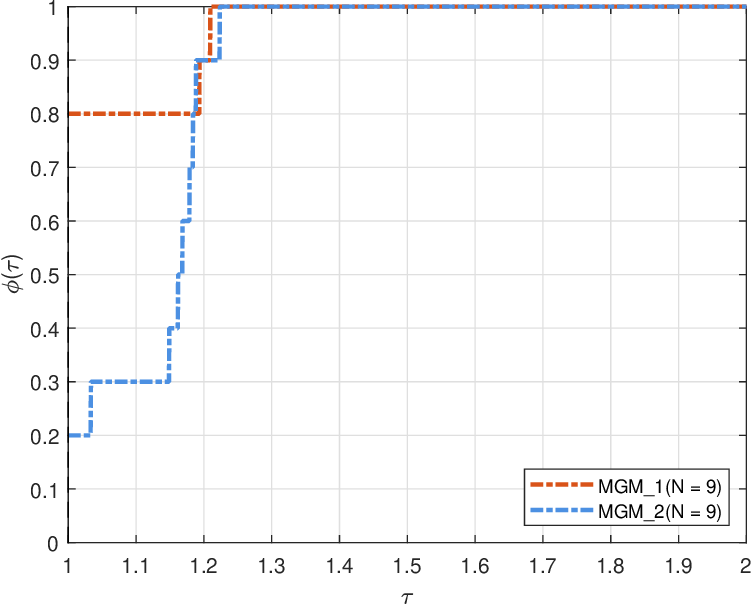} % 对应 N=9 函数评估的子图
        %\caption*{MMG-I1 ($N=9$) vs MMG-I2 ($N=9$)}
    \end{subfigure}
    \caption*{(b) CPU time}

    \caption{Performance profiles for MGM-1 and MGM-2 under the same $N$ value.}
    \label{fig1}
\end{figure}
Figure \ref{fig1} presents the performance profiles based on the number of iterations and CPU time. The results indicate that the performance of our methods is influenced by the parameters  $\gamma_k$ for the same value of $N$.

To study how a selection of $N$ value in MGM-1 and MGM-2 affect numerical performance, we conducted tests with $N=1,3,5,7,9$.
\begin{figure}[!ht]
    \centering
    % 子图 (a) Iterations
    \begin{subfigure}[b]{0.35\textwidth}
        \centering
        \includegraphics[width=\textwidth]{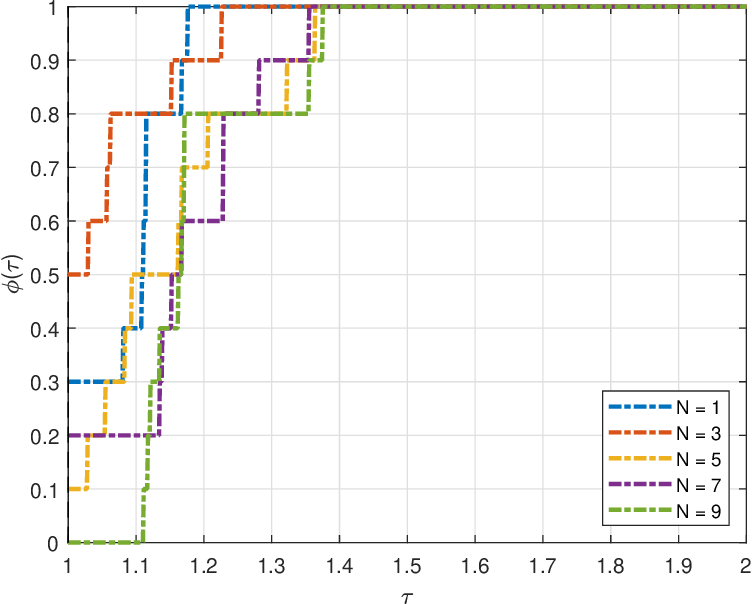}
        \caption{Iterations}
        \label{subfig:iterations}
    \end{subfigure}
    \hspace{15pt}
    % 子图 (b) CPU time
    \begin{subfigure}[b]{0.35\textwidth}
        \centering
        \includegraphics[width=\textwidth]{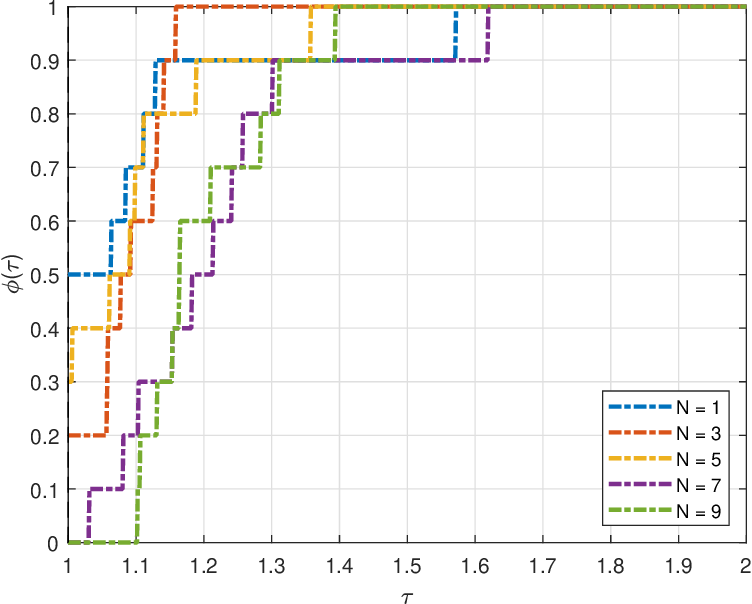}
        \caption{CPU time}
        \label{subfig:CPU time}
    \end{subfigure}
    \caption{Performance profiles for MGM-1 with different $N$ values.}
    \label{fig2}
    \vspace{1cm}
%\end{figure}
%\begin{figure}[htbp]
    \centering
    % 子图 (a) Iterations
    \begin{subfigure}[b]{0.35\textwidth}
        \centering
        \includegraphics[width=\textwidth]{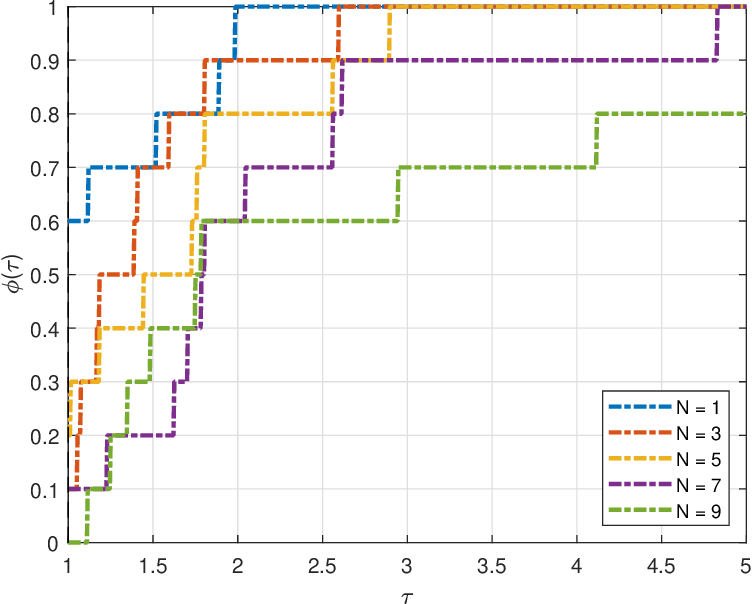}
        \caption{Iterations}
    \end{subfigure}
    \hspace{15pt}
    % 子图 (b) CPU time
    \begin{subfigure}[b]{0.35\textwidth}
        \centering
        \includegraphics[width=\textwidth]{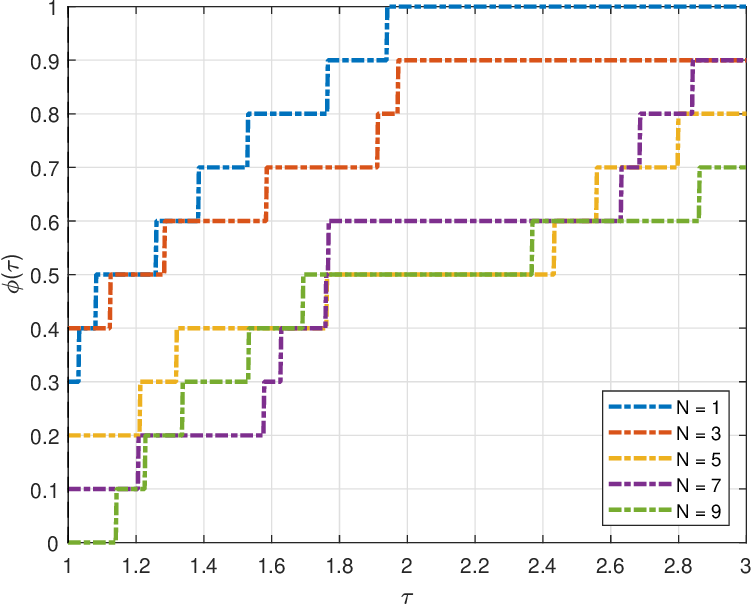}
        \caption{CPU time}
    \end{subfigure}
    \caption{Performance profiles for MGM-2 with different $N$ values.}
    \label{fig3}
\end{figure}
Figures 2 and 3 illustrate the performance profiles of MGM-1 and MGM-2 in terms of the number of iterations and CPU time, respectively. The results indicate that both MGM-1 and MGM-2 exhibit varying performance across different ranges depending on the chosen $N$ values. Therefore, the performance of MGM-1 and MGM-2 relies on the selection of parameter $N$. While it is challenging to determine the optimal choice theoretically, our experiments suggest that  $N=3$ for MGM-1 and $N=1$ for MGM-2 provide relatively robust performance.

\subsection{Analysis of all compared algorithms}
In this subsection, we present the results for SIPM, SS-HOPM, MGM-1 with $N=3$ and MGM-2 with $N=1$  to verify the performance of our method.

In Table 1, ``iter'' denotes the number of iterations, ``time'' represents the average CPU time by algorithms in seconds, ``$\lambda^\ast$''indicates the largest M-eigenvalue outputted by algorithms.
\begin{example}\label{ex1}
Let $\mathcal{A}= (a_{ijkl}) \in \mathbb{H}^{2\times2\times2\times 2}$ be a hierarchically symmetric tensor, whose entries are
\[
\begin{aligned}
a_{1111} &= 2, \quad a_{1211} = 3, \quad a_{2111} = 6, \quad a_{1121} = 6, \quad a_{1112} = 3, \quad a_{1212} = 2, \\
a_{2212} &= 10, \quad a_{1222} = 10, \quad a_{2222} = 5,
\end{aligned}
\]
and other $a_{ijkl} = 0$.
\end{example}

\begin{example}\label{ex2}
Consider the hierarchically symmetric tensor $\mathcal{A}=(a_{ijk\ell}) \in \mathbb{H}^{3\times3 \times3\times 3}$ with
\begin{align*}
a_{2222} = a_{1111} &= 196.6, \quad a_{3311} = a_{2233} = 83.2, \quad a_{2323} = a_{2322} = a_{3131} = a_{1331} = 54.7, \\
a_{2233} = a_{2323} &= -a_{1213} = -a_{2131} = -31.7, \quad a_{3333} = 110, \quad a_{2121} = a_{2121} = 64.4, \\
a_{2321} = a_{1232} &= -a_{1331} = -a_{1331} = -25.3, \quad a_{3112} = a_{1321} = 44.8, \\
a_{2132} = a_{1223} &= -35.84, \quad a_{1122} = 132.2,
\end{align*}
and other $a_{ijk\ell} = 0$.
\end{example}

\begin{example}\label{ex3}
Consider the tensor whose entries are uniformly generated in $(-1,1)$:
$$
\A(:,:,1,1) = \begin{pmatrix}
-0.9727 & 0.3169 & -0.3437 \\
-0.6332 & -0.7866 & 0.4257 \\
-0.3350 & -0.9896 & -0.4323
\end{pmatrix},
$$

$$
\A(:,:,2,1) = \begin{pmatrix}
-0.6332 & -0.7866 & 0.4257 \\
0.7387 & 0.6873 & -0.3248 \\
-0.7986 & -0.5988 & -0.9485
\end{pmatrix},
$$

$$
\A(:,:,3,1) = \begin{pmatrix}
-0.3350 & -0.9896 & -0.4323 \\
-0.7986 & -0.5988 & -0.9485 \\
0.5853 & 0.5921 & 0.6301
\end{pmatrix},
$$

$$
\A(:,:,1,2) = \begin{pmatrix}
0.3169 & 0.6158 & -0.0184 \\
-0.7866 & 0.0160 & 0.0085 \\
-0.9896 & -0.6663 & 0.2559
\end{pmatrix},
$$

$$
\A(:,:,2,2) = \begin{pmatrix}
-0.7866 & 0.0160 & 0.0085 \\
0.6873 & 0.5160 & -0.0216 \\
-0.5988 & 0.0411 & 0.9857
\end{pmatrix},
$$

$$
\A(:,:,3,2) = \begin{pmatrix}
-0.9896 & -0.6663 & 0.2559 \\
-0.5988 & 0.0411 & 0.9857 \\
0.5921 & -0.2907 & -0.3881
\end{pmatrix},
$$

$$
\A(:,:,1,3) = \begin{pmatrix}
-0.3437 & -0.0184 & 0.5649 \\
0.4257 & 0.0085 & -0.1439 \\
-0.4323 & 0.2559 & 0.6162
\end{pmatrix},
$$
$$
\A(:,:,2,3) = \begin{pmatrix}
0.4257 & 0.0085 & -0.1439 \\
-0.3248 & -0.0216 & -0.0037 \\
-0.9485 & 0.9857 & -0.7734
\end{pmatrix},
$$

$$
\A(:,:,3,3) = \begin{pmatrix}
-0.4323 & 0.2559 & 0.6162 \\
-0.9485 & 0.9857 & -0.7734 \\
0.6301 & -0.3881 & -0.8526
\end{pmatrix}.
$$
\end{example}

\begin{table}[htbp]
    \centering
    \caption{The numerical results of Examples \ref{ex1}-\ref{ex3}.}
    \label{tab1}
    \tabcolsep=0.5cm
  \renewcommand\arraystretch{1.0}
    \begin{tabular}{@{}l l l l l l @{}} % @{} 去除表格左右默认空白
      \hline
        $$&          $$&         MGM-1&      MGM-2&     SIPM&     SS-HOPM\\
        Example & $\lambda^\ast$ & iter/time     &iter/time      &iter/time  &iter/time \\
        \hline% booktabs 宏包的中间线条
        4.1 & 13.8616 & 5/0.0312 & 10/0.0899 & 11/0.0595& 15/0.1286  \\
        4.2 & 318 & 11/0.0236 & 25/0.0521 & 34/0.0677 & 40/0.1459 \\
        4.3 & 2.3227 & 10/0.1233 & 29/0.2221 & 30/0.2767 & 35/0.3292 \\
        \hline % booktabs 宏包的底部线条
    \end{tabular}
\end{table}
Table \ref{tab1} demonstrates the superior performance of MGM-1 for computing M-eigenvalues for Examples \ref{ex1}-\ref{ex3}. MGM-1 demonstrates significantly faster convergence compared to other methods, achieving fewer iterations and lower computational costs.

%All algorithms share the same start points and stopping criteria.

\begin{example}\label{ex4}
Suppose that $\mathcal{A}\in\mathbb{R}^{n\times n\times n\times n}$ is a hierarchically symmetric tensor with entries uniformly generated in (0,1), i.e., $\mathcal{A}\sim U(0,1)$.
\end{example}

Since the elements are generated in a random way, we investigate the averaged performance of the four methods by generating 10 groups of data for each case. Specifically, we investigate six cases on the problems' dimension, i.e., $n=\{5,10,15,20,25,30\}$. All methods start with the same random vectors, which are drawn from a standard normal distribution.

\begin{figure}[!ht]
    \centering
    % 子图 (a) Iterations
    \begin{subfigure}[b]{0.44\textwidth}
        \centering
        \includegraphics[width=\textwidth]{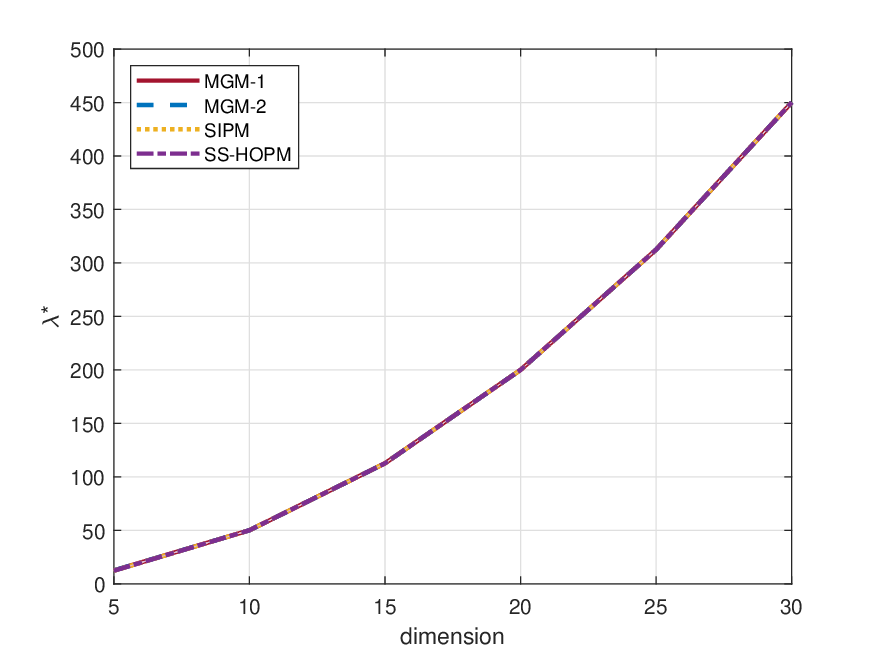}
        \caption{M-eigenvalues}
    \end{subfigure}
    \hspace{15pt}
    % 子图 (b) CPU time
    \begin{subfigure}[b]{0.44\textwidth}
        \centering
        \includegraphics[width=\textwidth]{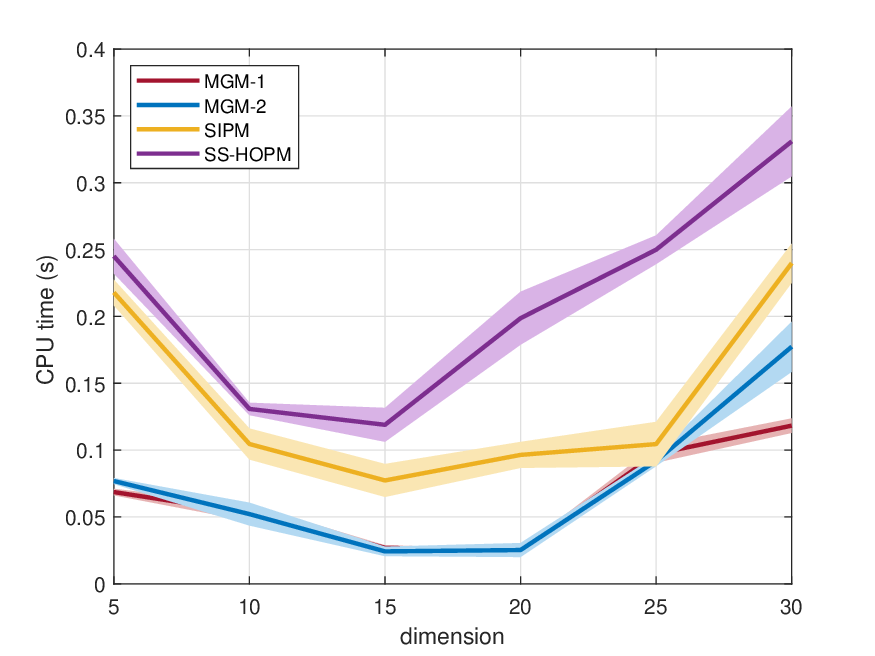}
        \caption{CPU time}
    \end{subfigure}
    \caption{Full performance of both MGM-1, MGM-2, SIPM and SS-HOPM in terms of the averaged values and standard derivation of 10 random tests.}
    \label{fig4}
\end{figure}

Figure \ref{fig4} shows the averaged M-eigenvalues and the CPU time, represented by solid lines. The shaded regions, colored accordingly, represent the distribution of values over 10 random experiments, with the width of the shaded regions indicating the standard deviation. A wider shaded region suggests a more unstable performance of the method. The two plots in Figure \ref{fig4} sufficiently support that the MGM-1 performs quite stable.

\begin{example}\label{ex5}
Let $\mathcal{A}\in\mathbb{R}^{m\times n\times m\times n}$ be a hierarchically symmetric tensor generated randomly with elements in (-5,5). In this experiment, we set $N=3$ for MGM-1 and $N=1$ for MGM-2.
\end{example}

The results are presented in Table \ref{tab2}, which show that MGM-1 outperforms its competitors in terms of efficiency and computational cost across all test cases.

\medskip
\begin{table}[ht]
  \caption{The numerical results of Example \ref{ex5}}
  \label{tab2}
  \begin{center}
  \tabcolsep=0.5cm
  \renewcommand\arraystretch{1.0}
    {\begin{tabular}[c]
      {  c  c  c  c c  c}
      \hline
        $$  &       $ $ &      MGM-1&    MGM-2&     SIPM &        SS-HOPM\\
         $(m,n)$  & $\lambda^\ast$ &  iter/time& iter/time&  iter/time &  iter/time\\
        \hline
         (12,18) &  14.9986 &     13/0.2352 &    15/0.3576&    21/0.5373  &     42/0.8522   \\
         (30,18) &  18.9550 &     24/1.5623 &    33/1.2134&    53/1.9543  &     84/2.3742    \\
         (30,30) &  23.2705 &     43/1.7412 &    40/1.5431&    72/2.5241  &     93/3.6652   \\
         (50,60) &  30.0465 &     51/3.2582 &    58/3.4671&    68/4.2521  &     153/6.3823   \\
         \hline
    \end{tabular}}
  \end{center}
\end{table}
\medskip

\section{Conclusions}

In this paper, we focused on computing extremal M-eigenvalue of fourth order hierarchically symmetric tensors by transforming the M-eigenvalue problem into an unconstrained optimization framework.
Based on a new reformulaton, we proposed a memory gradient method to find an approximate solution of the problem. Furthermore, global convergence of the proposed method is established and several numerical examples verify the performance of the algorithm.
\medskip

%\noindent{\bf Acknowledgment} We are very grateful to the editor and the anonymous reviewers for their constructive comments and meaningful
%suggestions on our manuscript.


\begin{thebibliography}{99}

\bibitem{AG91} Auchmuty G, {\it Globally and rapidly convergent algorithms for symmetric eigenproblems}, SIAM Journal on Matrix Analysis and Applications, (1991), 12(4): 690-706.

\bibitem{AN07} Andrei N, {\it A scaled BFGS preconditioned conjugate gradient algorithm for unconstrained optimization}, Applied Mathematics Letters, (2007), 20(6): 645-650.

\bibitem{AN207} Andrei N, {\it Scaled memoryless BFGS preconditioned conjugate gradient algorithm for unconstrained optimization}, Optimization Methods and Software, (2007), 22(4): 561-571.

\bibitem{B1970} Backus G, {\it A geometrical picture of anisotropic elastic tensors}, Reviews of geophysics, (1970), 8: 633-671.


\bibitem{BV08} Bloy L, Verma R, {\it On Computing the Underlying Fiber Directions From the Diffusion Orientation Distribution Function in Medical Image Computing and Computer-Assisted Intervention}, Springer: Berlin/Heidelberg, Germany, (2008), pp. 1-8.

\bibitem{CQ10} Chang K, Qi L, Zhou G, {\it Singular values of a real rectangular tensor}, Journal of Mathematical Analysis and Applications, (2010), 370(1): 284-294.

\bibitem{CCW20} Che H, Chen H, Wang Y, {\it On the M-eigenvalue estimation of fourth order partially symmetric tensors}, Journal of Industrial and Management Optimization, (2020), 16(1): 309-324.

%\bibitem{CQY13} Chen Z, Qi L, Yang Q, {\it The solution methods for the largest eigenvalue (singular value) of nonnegative tensors and convergence analysis}, Linear Algebra and its Applications, (2013), 439(12): 3713-3733.
%
\bibitem{CDC07} Chirita S, Danescu A, Ciarletta, {\it On the strong ellipticity of the anisotropic linearly elastic materials}, Journal of Elasticity, (2007), 87: 1-27.

\bibitem{CHW22} Chen H, He H, Wang Y, et al, {\it An efficient alternating minimization method for fourth degree polynomial optimization}, Journal of Global Optimization, (2022), 82: 83-103.

\bibitem{DLM07} Dahl G, Leinaas J, Myrheim J, {\it A tensor product matrix approximation problem in quantum physics}, Linear Algebra and its Applications, (2007), 420(2-3): 711-725.

\bibitem{DDV00} De L, De M, Vandewalle J, {\it On the best rank-1 and rank-$(R_1, R_2, \ldots, R_N)$ approximation of higher-order tensors}, SIAM journal on Matrix Analysis and Applications, (2000), 21(4): 1324-1342.

\bibitem{DQW13} Ding W, Qi L, Wei Y, {\it  M-tensors and nonsingular M-tensors}, Linear Algebra and Its Applications, (2013), 439(10): 3264-3278.

\bibitem{DM02} Dolan E, Mor$\acute{e}$ J, {\it Benchmarking optimization software with performance profiles}, Mathematical programming, (2002), 91(2): 201-213.

\bibitem{EPR35} Einstein A, Podolsky B, Rosen N, {\it Can quantum-mechanical description of physical reality be considered complete} ? Physical Review, (1935), 47(10): 777-780.

\bibitem{FHR1996} Fj\ae r  E,  Holt R M,  Rathore J  S, {\it Seismic Anisotropy, Chapter 3: Representation and Approximation of Elastic Tensors}, Society of Exploration Geophysicists (1996)


\bibitem{GC19}  Gao L, Cao Z, Wang G, {\it Almost sure stability of discrete-time nonlinear Markovian jump delayed systems with impulsive signals}, Nonlinear Analysis: Hybrid Systems, (2019), 34: 248-263.

\bibitem{GL19} Gao L, Luo F, Yan Z, {\it Finite-time annular domain stability of impulsive switched systems: mode-dependent parameter approach}, International Journal of Control, (2019), 92(6): 1381-1392.

\bibitem{GZ19} Gao L, Zhang M, Yao X, {\it Stochastic input-to-state stability for impulsive switched stochastic nonlinear systems with multiple jumps}, International Journal of Systems Science, (2019), 50(9): 1860-1871.

\bibitem{GM72} Gurtin M, {\it The Linear Theory of Elasticity in: Handbuch der Physik}, Springer, Berlin, (1972).

\bibitem{HDH09} Han D, Dai H, Qi L, {\it Conditions for strong ellipticity of anisotropic elastic materials}, Journal of Elasticity, (2009), 97: 1-13.

\bibitem{HL13} Han L, {\it An unconstrained optimization approach for finding real eigenvalues of even order symmetric tensors}, Numerical Algebra Control and Optimization, 3 (2013), 583-599.

\bibitem{HC15} Hao C, Cui C, Dai Y, {\it A sequential subspace projection method for extreme Z-eigenvalues of supersymmetric tensors}, Numerical Linear Algebra with Applications, (2015), 22(2): 283-298.

\bibitem{HCD15} Hao C, Cui C, Dai Y, {\it A feasible trust-region method for calculating extreme Z-eigenvalues of symmetric tensors}, (2015).
\bibitem{HLX21} He J, Liu Y, Xu G, {\it New M-eigenvalue inclusion sets for fourth order partially symmetric tensors with applications}, Bulletin of the Malaysian Mathematical Sciences Society, (2021), 44(6): 3929-3947.
\bibitem{HLW20} He J, Li C, Wei Y, {\it M-eigenvalue intervals and checkable sufficient conditions for the strong ellipticity}, Applied Mathematics Letters, (2020), 102: 106137.
\bibitem{H1994} Helbig K, {\it  Representation and Approximation of Elastic Tensors},  as poster PI-1 at the 6th International Workshop on Seismic Anisotropy, Trondheim, (1994), July: 3-8.


\bibitem{HQ18} Huang Z, Qi L, {\it Positive definiteness of paired symmetric tensors and elasticity tensors}, Journal of Computational and Applied Mathematics, (2018), 338: 22-43.

\bibitem{HLQ13} Hu S, Li G, Qi L, et al, {\it Finding the maximum eigenvalue of essentially nonnegative symmetric tensors via sum of squares programming}, Journal of Optimization Theory and Applications, (2013), 158(3): 717-738.

\bibitem{HXL17} He J, Xu G, Liu Y, {\it Some inequalities for the minimum M-eigenvalue of elasticity M-tensors}, Journal of Industrial and Management Optimization, (2017), 13(5): 1-11.

\bibitem{KR02} Kofidis E, Regalia P, {\it On the best rank-1 approximation of higher-order supersymmetric tensors}, SIAM Journal on Matrix Analysis and Applications, (2002), 23(3): 863-884.

\bibitem{LL15} Li C, Li Y, {\it Double B-tensors and quasi-double B-tensors}, Linear algebra and its applications, (2015), 466: 343-356.

\bibitem{LLK14} Li C, Li Y, Kong X, {\it New eigenvalue inclusion sets for tensors}, Numerical Linear Algebra with Applications, (2014), 21(1): 39-50.

\bibitem{LWZ14} Li C, Wang F, Zhao J, {\it Criterions for the positive definiteness of real supersymmetric tensors}, Journal of Computational and Applied Mathematics, (2014), 255: 1-14.

\bibitem{LG13} Li G, Qi L, Yu G, {\it The Z-eigenvalues of a symmetric tensor and its application to spectral hypergraph theory},  Numerical Linear Algebra with Application, (2013), 20, 1001-1029.

\bibitem{LLL19} Li S, Li C, Li Y, {\it M-eigenvalue inclusion intervals for a fourth order partially symmetric tensor}, Journal of Computational and Applied Mathematics, (2019), 356: 391-401.

\bibitem{MC69} Miele A, Cantrell J, {\it Study on a memory gradient method for the minimization of functions}, Journal of Optimization Theory and Applications, (1969), 3(6): 459-470.

\bibitem{NY06} Narushima Y, Yabe H, {\it Global convergence of a memory gradient method for unconstrained optimization}, Computational Optimization and Applications, (2006), 35(3): 325-346.

\bibitem{NQ15} Ni Q, Qi L, {\it A quadratically convergent algorithm for finding the largest eigenvalue of a nonnegative homogeneous polynomial map}, Journal of Global Optimization, (2015), 61(4): 627-641.

\bibitem{NW99} Nocedal J, Wright S, {\it Numerical optimization}, New York, NY: Springer New York, (1999).

\bibitem{PC02} Padovani C, {\it Strong Ellipticity of Transversely Isotropic Elasticity Tensors}, Meccanica, (2002), 37(6):515-525.

\bibitem{PS88} Peressini A, Sullivan F E, Uhl J J, {\it The mathematics of nonlinear programming}, New York: Springer-Verlag, (1988).

\bibitem{QDH09} Qi L, Dai H, Han D, {\it Conditions for strong ellipticity and M-eigenvalues}, Frontiers of Mathematics in China, (2009), 4(2):349-364.

\bibitem{QL17}Qi L, Luo Z, {\it Tensor analysis: spectral theory and special tensors}, Society for Industrial and Applied Mathematics, (2017).

\bibitem{QY10} Qi L, Yu G, Wu E, {\it Higher order positive semidefinite diffusion tensor imaging}, SIAM Journal on Imaging Sciences, (2010), 3(3): 416-433.

\bibitem{S35} Schr$\ddot{o}$dinger E, {\it Die gegenw$\ddot{a}$rtige situation in der quantenmechanik}, Naturwissenschaften, (1935), 23(50): 844-849.

\bibitem{WCW23} Wang C, Chen H, Wang Y, {\it An alternating shifted inverse power method for the extremal eigenvalues of fourth order partially symmetric tensors}, Applied Mathematics Letters, (2023), 141: 108601.

\bibitem{WQZ09} Wang Y, Qi L, Zhang X, {\it A practical method for computing the largest M-eigenvalue of a fourth order partially symmetric tensor}, Numerical Linear Algebra with Applications, (2009), 16(7): 589-601.

\bibitem{WSL20} Wang G, Sun L, Liu L, {\it M-eigenvalues-based sufficient conditions for the positive definiteness of fourth order partially symmetric tensors}, Complexity 2020, (2020), 3:1-8.

\bibitem{WW03} Walton J, Wilber J, {\it Sufficient conditions for strong ellipticity for a class of anisotropic materials}, International Journal of Non-Linear Mechanics, (2003), 38(4):441-455.

\bibitem{YY11} Yang Q, Yang Y, {\it Further results for Perron-Frobenius theorem for nonnegative tensors II}, SIAM Journal on Matrix Analysis and Applications, (2011), 32(4): 1236-1250.

\bibitem{ZX01} Zhang J, Xu C, {\it Properties and numerical performance of quasi-Newton methods with modified quasi-Newton equations}. Journal of Computational and Applied Mathematics, (2001), 137(2): 269-278.

\bibitem{ZD99} Zhang J, Deng N, Chen L, {\it New quasi-Newton equation and related methods for unconstrained optimization}, Journal of Optimization Theory and Applications, (1999), 102(1): 147-167.
\bibitem{ZO70} Zoutendijk G, {\it Nonlinear programming, computational methods in Integer and nonlinear programming},  J. Abadie, (ed.), North-Holland, Amsterdam, (1970): 37-86.




%\bibitem{GW18} Gu Y, Wu W, {\it Partially symmetric nonnegative rectangular tensors and copositive rectangular tensors}, Journal of Industrial and Management Optimization, (2018), 15(2): 775-789.
%
%\bibitem{HDH09} Han D, Dai H, Qi L, {\it Conditions for strong ellipticity of anisotropic elastic materials}, Journal of Elasticity, (2009), 97: 1-13.
%
%\bibitem{HLW20} He J, Li C, Wei Y, {\it M-eigenvalue intervals and checkable sufficient conditions for the strong ellipticity}, Applied Mathematics Letters, (2020), 102: 106137.
%
%\bibitem{HLK16} He J, Liu Y, Ke H, {\it Bound for the largest singular value of nonnegative rectangular tensors}, Open Mathematics, (2016), 14(1): 761-766.
%
%\bibitem{HLX19} He J, Liu Y, Xu G, {\it V-singular values of rectangular tensors and their applications}, Journal of Inequalities and Applications, (2019): 1-15.
%
%
%\bibitem{KS75} Knowles J K, Sternberg E, {\it On the ellipticity of the equations of nonlinear elastostatics for a special material}, Journal of Elasticity, (1975), 5(3-4): 341-361.
%
%\bibitem{KS76} Knowles J K, Sternberg E, {\it On the failure of ellipticity of the equations for finite elastostatic plane strain}, Archive for Rational Mechanics and Analysis, (1976), 63(4): 321-336.
%
%\bibitem{KM11} Kolda T G, Mayo J R, {\it Shifted power method for computing tensor eigenpairs}, SIAM Journal on Matrix Analysis and Applications, (2011), 32(4): 1095-1124.
%
%\bibitem{KM14} Kolda T G, Mayo  J R, {\it An adaptive shifted power method for computing generalized tensor eigenpairs}, SIAM Journal on Matrix Analysis and Applications, (2014), 35: 1563-1581.
%
%\bibitem{LL16} Li C, Li Y, {\it An eigenvalue localization set for tensors with applications to determine the positive (semi-)definiteness
%of tensors}, Linear Multilinear Algebra, (2016), 64: 587-601.
%
%\bibitem{LQ13} Ling C, Qi L, {\it $l^{k,s}$-Singular values and spectral radius of rectangular tensors}, Frontiers of Mathematics in China, (2013), 8: 63-83.


\end{thebibliography}
\end{document}